\DeclareMathAlphabet{\eusm}{OT1}{eusm}{m}{n}
\newtheorem{theor}{Theorem}[section]
\newtheorem{prop}[theor]{Proposition}
\newtheorem{defi}[theor]{Definition}
\newtheorem{cor}[theor]{Corollary}
\newtheorem{lem}[theor]{Lemma}
\newtheorem{notation}[theor]{Notation}
\newtheorem{exam}[theor]{Example}
\newtheorem{remarks}[theor]{Remarks}
\def\Ker{\mbox{Ker\/}}
\def\Im{\mbox{Im\/}}
\def\End{\mbox{End\/}}
\def\Aut{\mbox{Aut\/}}
\def\Gal{\mbox{Gal\/}}
\def\coGal{\mbox{coGal\/}}
\def\H{\mbox{Hom\/}}
\def\M{\mbox{Mod\/}}
\begin{document}
\title[Modules invariant under automorphisms]{Modules invariant under automorphisms of their covers and envelopes}
\author{Pedro A. Guil Asensio}
\thanks{The first author was partially supported by the DGI (MTM2010-20940-C02-02)
and by the Excellence Research Groups Program of the S\'eneca Foundation of the Region of Murcia. Part of
the sources of both institutions come from the FEDER funds of the European Union.}
\address{Departamento de Mathematicas, Universidad de Murcia, Murcia, 30100, Spain}
\email{paguil@um.es}
\author{Derya Kesk\.{i}n T\"ut\"unc\"u}
\address{Department of Mathematics, Hacettepe University, Ankara, 06800, Turkey}
\email{keskin@hacettepe.edu.tr}
\author{Ashish K. Srivastava}
\address{Department of Mathematics and Computer Science, St. Louis University, St.
Louis, MO-63103, USA}
\email{asrivas3@slu.edu}
\keywords{automorphism-invariant modules, automorphism-coinvariant modules, envelopes, covers}
\subjclass[2000]{16D50, 16U60, 16W20}

\begin{abstract}
In this paper we develop a general theory of modules which are invariant under automorphisms of their covers and envelopes. When applied to specific cases like injective envelopes, pure-injective envelopes, cotorsion envelopes, projective covers, or flat covers, these results extend and provide a much more succinct and clear proofs for various results existing in the literature. Our results are based on several key observations on the additive unit structure of von Neumann regular rings.
\end{abstract}

\maketitle

\section{Introduction.}

\noindent The study of modules which are invariant under the
action of certain subsets of the endomorphism ring of their
injective envelope can be drawn back to the pioneering work of
Johnson and Wong \cite{JW} in which they characterized
quasi-injective modules as those modules which are invariant under
any endomorphism of their injective envelope. Later, Dickson and
Fuller \cite{DF} initiated the study of modules which are
invariant under the group of all automorphisms of their injective envelope. Such modules are called {\it automorphism-invariant
modules}. Fuller and Dickson proved that any indecomposable
automorphism-invariant module over a $K$-algebra $A$ is
quasi-injective provided that $K$ is a field with more than two
elements. And this result has been recently extended in \cite{GS1}
to arbitrary automorphism-invariant modules $M$ such that their
endomorphism ring has no homomorphic images isomorphic to the
field of two elements $\mathbb{Z}_2$.

Although, in the general setting, an
automorphism-invariant module does not need to be quasi-injective (see Example \ref{no-qi}),
its endomorphism ring  shares several important properties with
the endomorphism ring of a quasi-injective module. For example, it has been proved in
\cite{GS2} that the endomorphism ring of an
automorphism-invariant module $M$ is always a von Neumann regular
ring modulo its Jacobson radical $J$, idempotents lift modulo $J$,
and $J$ consists of those endomorphisms of $M$ which have essential
kernel. Analogous results were obtained by
Warfield \cite{Warfield1} for injective modules; by Faith and Utumi \cite{FU} for the case of quasi-injective modules; and by Huisgen-Zimmermann and Zimmermann for pure-injective modules \cite{ZZ}. Moreover, it
has been shown in \cite{ESS}  that any automorphism-invariant module $M$
is of the form $M=A\oplus B$ where $A$ is quasi-injective and $B$
is square-free. As a consequence of these results, it follows that
any automorphism-invariant module satisfies the full exchange
property which extends results of Warfield \cite{Warfield1} and Fuchs \cite{Fuchs}.
These modules also provide a new class of clean
modules (see \cite{GS2}). A dual notion of automorphism-invariant
module has been recently introduced in
\cite{SS1}.

The objective of this paper is to provide a general setting where
the above results can be obtained by developing  a general theory
of modules which are invariant under automorphisms of their covers
and envelopes. We define the notions of $\mathcal
X$-automorphism-invariant and $\mathcal
X$-automorphism-coinvariant modules, where $\mathcal X$ is any
class of modules closed under isomorphisms. And we show that the
above characterizations are particular instances of much more
general results for these modules. Namely, we prove that if  $u:
M\rightarrow X$ is a monomorphic $\mathcal{X}$-envelope of a module
$M$ such that $M$ is $\mathcal{X}$-automorphism-invariant,
$\End(X)/J(\End(X))$ is a von Neumann regular right self-injective
ring and idempotents lift modulo $J(\End(X))$, then
$\End(M)/J(\End(M))$ is also von Neumann regular and idempotents
lift modulo $J(\End(M))$ and consequently, $M$ satisfies the finite exchange property. Moreover if we assume that every direct summand of $M$ has an $\mathcal X$-envelope, then in this case, $M$ has a
decomposition $M=A\oplus B$ where $A$ is square-free and $B$ is
$\mathcal X$-endomorphism-invariant. If we assume in addition that for $\mathcal X$-endomorphism-invariant modules, the finite exchange property implies the full exchange property, then $M$ also satisfies the full exchange property,  thus extending results of Warfield \cite{Warfield1}, Fuchs
\cite{Fuchs} and Huisgen-Zimmermann and Zimmermann \cite{ZZ1} for injective, quasi-injective and pure-injective modules, respectively. As a consequence of our results, it also follows that $\mathcal{X}$-automorphism-invariant modules $M$, whose every direct summand of $M$ has an $\mathcal X$-envelope, are clean. We also dualize these
results and obtain similar results for $\mathcal
X$-automorphism-coinvariant modules.

Let us note that the fact that automorphism-invariant and automorphism-coinvariant modules inherit these good properties from the endomorphism rings of their envelopes and covers is not surprising. As pointed out in Remarks \ref{galois} and \ref{dualgalois}, a module $M$ having a monomorphic $\mathcal X$-envelope $u:M\rightarrow X$ is $\mathcal X$-automorphism-invariant if and only if $u$ establishes an isomorphism of groups $\Aut(M)\cong \Aut(X)/\Gal(u)$, where $\Gal(u)$ is the Galois group of the envelope $u$ (see e.g. \cite{EGJO,EEG}). Whereas a module $M$ having an epimorphic $\mathcal X$-cover $p:X\rightarrow M$ is $\mathcal X$-automorphism-coinvariant if and only if $p$ induces an isomorphism $\Aut(M)\cong \Aut(X)/\coGal(p)$, where $\coGal(p)$ is the co-Galois group of the cover $p$.

The crucial steps in proving these results are the
observations made in Section 2 about the additive unit structure of a von Neumann regular ring. In this section, we prove that if $S$ is a right self-injective von Neumann
regular ring and $R$ is a subring of $S$ which is stable
under left multiplication by units of $S$, then $R$ is von Neumann
regular and moreover, $R=R_1 \times R_2$, where $R_1$ is a
an abelian regular ring and $R_2$ is a von Neumann
regular right self-injective ring which is invariant under left
multiplication by elements in $S$. These observations are based on
the classification theory developed by Kaplansky for Baer rings \cite{Kap2}. Moreover, as a consequence of these, we deduce that if $S$ is a right self-injective von Neumann regular ring of characteristic $n\geq 0$, then the image $S'$ of the group ring $\mathbb{Z}_n[\Aut(S)]$ inside $S$ under the homomorphism sending an element of $\Aut(S)$ to the corresponding element in $S$ is also von Neumann regular, where we denote by $\Aut(S)$ the group of units of $S$. And $S=S'$ provided that $S$ has no homomorphic images isomorphic to $\mathbb{Z}_2$; in particular, when ${\rm char}(S)=n>0$ and $2\nmid n$ (see Corollaries \ref{reg1}, \ref{reg2}).

These results are finally applied in Section 5 to a wide variety of classes of modules such as the classes of injective modules, projective modules, pure-injective modules, cotorsion modules, flat modules and many interesting results are obtained which extend, clarify and simplify the proofs of the results in \cite{DF,ESS,FU,Fuchs,GS2,SS2,Warfield1,ZZ,ZZ1}.

Throughout this paper, $R$ will always denote an associative ring with identity element and modules will be unital right modules unless otherwise is stated. $J(R)$ will denote the Jacobson radical of the ring $R$. We refer to \cite{AF} and \cite{mm} for any undefined notion arising in the text.

\bigskip

\section{Observations on the additive unit structure of a von Neumann regular ring.}

\noindent In this section, we will establish some useful tools dealing with additive unit representation of von Neumann regular rings. The study of the additive unit structure of rings has its roots in the investigations of Dieudonn\'{e} on Galois theory of
simple and semisimple rings \cite{D}. Laszlo Fuchs raised the question of determining when an endomorphism ring is generated additively by automorphisms, which has been studied by many authors (see, for example \cite{H,St}). Let us note that the question of characterizing when $\mathcal X$-automorphism-invariant modules are $\mathcal X$-endomorphism-invariant is linked to the above question of Fuchs.

We say that an $n\times n$ matrix $A$ over a ring $R$ \textit{admits a diagonal reduction} if there exist invertible matrices $P,Q\in \mathbb M_{n}(R)$ such that $PAQ $ is a diagonal matrix. Following Ara et al. \cite{AGMP}, a ring $R$ is called an \textit{elementary divisor ring} if every square matrix over $R$ admits a diagonal reduction. This definition is less stringent than the one proposed by Kaplansky in \cite{Kap}. The class of elementary divisor rings includes right self-injective von Neumann regular rings.

It is not difficult to see that if $R$ is any ring, then any $n\times n$ (where $n\ge 2$) diagonal matrix over $R$ is the sum of two invertible matrices (see \cite{Henriksen1}). Thus it follows that if $R$ is an elementary divisor ring, then each element in the matrix ring $\mathbb M_n(R)$ is the sum of two units for $n\ge 2$.

 The next lemma is inherent in \cite[Theorem 1]{KS1}. We provide the proof for the sake of the completeness as the result is not stated in this form there.

\begin{lem} \label{type}
If $S$ is a right self-injective von Neumann regular ring, then $S=T_1\times T_2$, where $T_1$ is an abelian regular self-injective ring and any element in $T_2$ is the
sum of two units.
\end{lem}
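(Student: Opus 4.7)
The plan is to extract the decomposition from the argument of \cite[Theorem~1]{KS1}, which in turn rests on the Kaplansky--Goodearl type decomposition of right self-injective von Neumann regular rings. First I would split off the abelian piece: let $e \in S$ be the central idempotent cutting out the largest direct summand in which every idempotent is central (equivalently, the Type $\mathrm{I}_1$ summand in the standard type decomposition). Setting $T_1 = eS$ and $T_2 = (1-e)S$, I get that $T_1$ is abelian regular and self-injective, while $T_2$ is self-injective regular with no nonzero abelian direct summand.

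The remaining work is to show that every element of $T_2$ is a sum of two units. By the type decomposition, $T_2$ is a direct product of factors each of one of two shapes. The Type $\mathrm{I}_n$ factors with $n \geq 2$ are isomorphic to $\mathbb{M}_n(C)$ for an abelian regular self-injective ring $C$; in the Type $\mathrm{I}_\infty$, $\mathrm{II}$, and $\mathrm{III}$ factors the abundance of orthogonal equivalent idempotents provides a halving idempotent $f$ with $fT_2 \cong (1-f)T_2$ as right modules, so the factor is isomorphic to $\mathbb{M}_2(fT_2 f)$. Thus every factor of $T_2$ is an $n \times n$ matrix ring with $n \geq 2$ over a right self-injective regular, hence elementary divisor, ring. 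The observation recorded in the paragraph preceding the lemma then says that every element of such a matrix ring is a sum of two units, and this property is preserved under arbitrary direct products, yielding the claim for $T_2$.

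The main obstacle I anticipate is the production of the halving idempotent in the Type $\mathrm{I}_\infty$/$\mathrm{II}$/$\mathrm{III}$ summands: mere absence of an abelian direct summand is not enough in general to give an $\mathbb{M}_2$ description (for instance $\mathbb{M}_3$ over a field has no abelian summand yet is not isomorphic to $\mathbb{M}_2$ of anything, for dimension-over-the-center reasons), so one cannot avoid splitting $T_2$ along its type decomposition. The right self-injectivity hypothesis is used essentially twice, once to guarantee the existence of the type decomposition itself and once to produce the halving idempotent in the non-Type-$\mathrm{I}_f$ factors via suprema of orthogonal equivalent idempotent families. With halving in hand, assembling the factorwise statements into the product yields the desired $T_1 \times T_2$ decomposition.
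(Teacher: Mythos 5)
Your proposal is correct and follows essentially the same route as the paper: both rest on the Kaplansky--Goodearl type decomposition, reduce the non-abelian part to $2\times 2$ (or $n\times n$, $n\geq 2$) matrix rings over right self-injective regular corners via halving idempotents (the paper cites \cite[Theorems 10.16, 10.24, Proposition 10.28]{Goodearl} for exactly the halvings you describe), and then invoke the elementary-divisor-ring observation that such matrix rings have every element a sum of two units. The only cosmetic difference is that you split off the abelian Type $\mathrm{I}_1$ summand first, while the paper peels off the purely infinite, Type $\mathrm{II}_f$, and Type $\mathrm{I}_f$ pieces separately and extracts the abelian part from the product decomposition $\prod\mathbb{M}_{n_i}(A_i)$ of the Type $\mathrm{I}_f$ factor.
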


\begin{proof}
Let $S$ be a right self-injective von Neumann regular ring. Using the Type theory due to Kaplansky, we know that $S= R_1 \times R_2 \times R_3 \times R_4 \times R_5$ where $R_1$ is of type $I_f$, $R_2$ is of type $I_\infty$, $R_3$ is of type $II_f$, $R_4$ is of type $II_\infty$, and $R_5$ is of type $III$ (see \cite[Theorem 10.22]{Goodearl}).  Taking $P=R_2 \times R_4 \times R_5$, we may write $S= R_1 \times R_3 \times P$, where $P$ is purely infinite. We have $P_P\cong nP_P$ for all positive integers $n$ by \cite[Theorem 10.16]{Goodearl}. In particular, for $n=2$, this yields $P \cong \mathbb M_{2}(P)$. Since $P$ is an elementary divisor ring, it follows that each element of $\mathbb M_{2}(P)$ and consequently, each element of $P$ is the sum of two units. Since $R_3$ is of type $II_f$, we can write $R_3 \cong n(e_{n}R_3)$ for each $n \in \mathbb N$ where $e_n$ is an idempotent in $R$ (see \cite[Proposition 10.28]{Goodearl}). In particular, for $n = 2$ we have $R_3 \cong \mathbb M_{2}(e_{3}R_{3}e_{3}$). As $e_{3}R_{3}e_{3}$ is an elementary divisor ring, it follows that each element in $\mathbb M_{2}(e_{3}R_{3}e_{3})$ and hence each element in $R_3$ is the sum of two units. We know that $R_1 \cong \prod\mathbb{M}_{n_i}(A_i)$ where each $A_i$ is an abelian regular self-injective ring (see \cite[Theorem 10.24]{Goodearl}).  Since each $A_i$ is an elementary divisor ring, we know that each element in $M_{n_i}(A_i)$ is the sum of two units whenever $n_i \ge 2$.

Thus we conclude that $S=T_1\times
T_2$, where $T_1$ is an abelian regular right self-injective ring and any element in $T_2$ is the
sum of two units.
\end{proof}

\begin{notation}
\rm We know by the above lemma that any right self-injective von Neumann regular ring $S$ can be decomposed as $S=T_1\times T_2$, where $T_1$ is an abelian  regular ring and any element in $T_2$ is the sum of two units. Therefore, any right $S$-module $N$ is of the form
$N=N_1\times N_2$  where $N_1$ is a right $T_1$-module and $N_2$ a
right $T_2$-module. And any homomorphism $f: N=N_1\times
N_2\rightarrow M=M_1\times M_2$ is of the form $f=f_1\times
f_2$, where $f_1: N_1\rightarrow M_1$ is a $T_1$-homomorphism
and $f_2: N_2\rightarrow M_2$ is a $T_2$-homomorphism.

We will follow this notation along the rest of this section.
\end{notation}

\begin{lem} \label{key1} Let $u_1\times u_2: N_1\times N_2\hookrightarrow T_1\times
T_2$ be a right ideal and $f_1\times f_2: N_1\times
N_2\rightarrow T_1\times T_2$, a homomorphism. If $f_1$ is
monic, then $f_1\times f_2$ extends to an endomorphism $\varphi_1\times
\varphi_2$ of $T_1\times T_2$ which is the sum of three units.
\end{lem}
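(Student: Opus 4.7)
The plan is to first extend $f_1\times f_2$ to some endomorphism $\varphi_1\times\varphi_2$ of $T_1\times T_2$ using self-injectivity of the factors, then use the monicness of $f_1$ to arrange that $\varphi_1$ is a unit of $T_1$, and finally write the resulting endomorphism as a sum of three units coordinate-wise. Identifying $\End(T_i)=T_i$ via left multiplication, we may view $\varphi_1$ and $\varphi_2$ as left multiplications by some elements $c\in T_1$ and $s\in T_2$.

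The modification on the first coordinate will proceed as follows. Since $T_1$ is abelian regular (hence strongly regular), $\mathrm{rann}(c)=(1-g)T_1$ for a central idempotent $g$ with $cg=c$. The monicness of $f_1$ forces $N_1\cap(1-g)T_1=0$, and centrality of $1-g$ then yields $N_1\subseteq gT_1$. The element $c$ has zero right annihilator inside the strongly regular ring $gT_1$, so it is a unit there (any element with zero right annihilator in a strongly regular ring is invertible). Letting $c'\in gT_1$ denote its inverse, the element $b:=c+(1-g)$ should then be a unit of $T_1$ with inverse $c'+(1-g)$, and a direct check shows $b$ agrees with $c$ on $N_1\subseteq gT_1$; hence left multiplication by $b$ is an automorphism of $T_1$ extending $f_1$, and we replace $\varphi_1$ by it.

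With $\varphi_1$ now a unit, we invoke Lemma~\ref{type}: every element of $T_2$ is a sum of two units, so $s+1=u+v$ for some $u,v\in\Aut(T_2)$, whence $s=u+v+(-1)$ is a sum of three units of $T_2$. Combining with the trivial decomposition $b=b+1+(-1)$ on the first coordinate yields
\[
\varphi_1\times\varphi_2=(b,u)+(1,v)+(-1,-1),
\]
and each summand is a unit of $T_1\times T_2$ because each of its coordinates is a unit of $T_1$ or $T_2$, respectively. The hard part will be the middle step: upgrading the bare self-injective extension of $f_1$ to a genuine unit of $T_1$ by simultaneously exploiting the monicness of $f_1$ and the abelian regular structure of $T_1$. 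Once that is in hand, the decomposition into three units is a purely formal coordinate-wise assembly using the sum-of-two-units property of $T_2$ supplied by Lemma~\ref{type}.
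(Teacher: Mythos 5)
Your proof is correct, but on the crucial first‑coordinate step it takes a genuinely different route from the paper. The paper argues module‑theoretically: it picks a complement $K_1$ of $N_1$ in $T_1$, writes $T_1=E(N_1)\oplus E(K_1)$ (summands because $T_1$ is self‑injective regular), extends $f_1$ to a monomorphism $g_1$ on $E(N_1)$ whose image is a direct summand, and then invokes direct finiteness (cancellation) of the abelian regular ring $T_1$ to get an isomorphism between the complements $E(K_1)$ and $L_1$, assembling an automorphism $\varphi_1$ of $T_1$ with $\varphi_1\circ u_1=f_1$. You instead work element‑wise in the strongly regular ring $T_1$: extend $f_1$ to left multiplication by $c$, use that the right annihilator of $c$ is $(1-g)T_1$ for a central idempotent $g$ with $cg=c$, deduce from monicness that $N_1\cap(1-g)T_1=0$ and hence (using that $N_1$ is a right ideal together with centrality of $1-g$ -- a point worth stating explicitly, since $(1-g)x=x(1-g)\in N_1$ is what forces $(1-g)N_1=0$) that $N_1\subseteq gT_1$, then invert $c$ in the corner $gT_1$ and patch with $1-g$ to obtain the unit $b=c+(1-g)$ still extending $f_1$. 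This buys a more elementary, self‑contained argument that avoids essential extensions, injective hulls and cancellation of direct summands, at the price of leaning on standard facts about strongly regular rings (annihilators generated by central idempotents, one‑sided invertibility implying invertibility). The final assembly is essentially the paper's: both of you use Lemma~\ref{type} to write the $T_2$-component as a sum of three units (the paper splits $\varphi_2=\varphi_2^1+\varphi_2^3+\varphi_2^4$, you split $s=u+v+(-1)$) and pair them with $b$, $1$, $-1$ in the first coordinate, so the conclusion that the extension is a sum of three units of $T_1\times T_2$ holds in both versions.
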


\begin{proof} Let $K_1$ be a complement of $N_1$ in $T_1$. Then $T_1=E(K_1)\oplus
E(N_1)$ and $f_1$ extends to an injective homomorphism $g_1:
E(N_1)\rightarrow T_1$. As $T_1$ is von Neumann regular and right
self-injective, $\Im g_1$ is a direct summand of $T_1$. Write
$T_1=\Im g_1\oplus L_1$. And, as $T_1$ is directly-finite being an
abelian regular ring and $E(N_1)\cong \Im g_1$, we deduce that
there exists an isomorphism $h_1: E(K_1)\rightarrow L_1$. Let now
$\varphi_1: E(N_1)\oplus E(K_1)\rightarrow \Im g_1\oplus L_1$ be
the automorphism induced by $(g_1, h_1)$. Clearly, $\varphi_1\circ
u_1=f_1$. On the other hand, as $T_2$ is right self-injective,
there exists a $\varphi_2: T_2\rightarrow T_2$ such that
$\varphi_2\circ u_2=f_2$. Write
$\varphi_2=\varphi_2^1+\varphi_2^2$ as the sum of two units in
$T_2$. Again, $\varphi_2^2=\varphi_2^3+\varphi_2^4$ is the sum of
two units in $T_2$.

Now it is clear by construction that $(\varphi_1 \times
\varphi_2)\circ (u_1 \times u_2)=f_1\times f_2$. Moreover,
$$
(\varphi_1 \times \varphi_2)=(\varphi_1\times
\varphi_2^1)+(0\times \varphi_2^2)=(\varphi_1\times \varphi_2^1)+
(1_{T_1}\times \varphi_2^3)+(-1_{T_1}\times \varphi_2^4)
$$
is the sum of three units in $T_1\times T_2$.
\end{proof}

As a consequence of these observations on the additive unit representation of $S$, we can prove the following useful result.

\begin{theor}\label{key2} Let $S$ be a right self-injective
von Neumann regular ring and $R$, a subring of $S$. Assume that $R$ is stable
under left multiplication by units of $S$. Then $R$ is also von Neumann regular.

Moreover, $R=R_1 \times R_2$, where $R_1$ is an abelian regular ring and $R_2$ is a von
Neumann regular right self-injective ring which is invariant under
left multiplication by elements in $S$.
\end{theor}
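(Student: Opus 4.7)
The plan is to use the decomposition $S=T_1\times T_2$ furnished by Lemma \ref{type} as the backbone and to exploit the closure of $R$ under left multiplication by units to force $R$ to split compatibly as $R=R_1\times R_2$ with $R_1\subseteq T_1$ and $R_2=T_2$. The workhorse units of $S$ are $(\pm 1_{T_1},u)$ for units $u$ of $T_2$, paired with the fact (from Lemma \ref{type}) that every element of $T_2$ is a sum of two units.

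First I would establish the product decomposition of $R$. Given $r=(r_1,r_2)\in R$ and units $u_1,u_2$ of $T_2$, both $(1_{T_1},u_1)r$ and $(-1_{T_1},u_2)r$ lie in $R$; summing them yields $(0,(u_1+u_2)r_2)\in R$. Letting $u_1+u_2$ range over all of $T_2$ gives $(0,tr_2)\in R$ for every $t\in T_2$. Taking $t=1_{T_2}$ produces $(0,r_2)\in R$ and hence $(r_1,0)=r-(0,r_2)\in R$, so $R=R_1\times R_2$ with $R_i\subseteq T_i$. Moreover $R_2$ is a left ideal of $T_2$ containing $1_{T_2}$, forcing $R_2=T_2$; this factor is therefore right self-injective von Neumann regular, and is manifestly stable under left multiplication by all of $S$, since $(s_1,s_2)\cdot(0,r_2)=(0,s_2r_2)$ is automatically in $R_2$.

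It remains to show $R_1\subseteq T_1$ is von Neumann regular; abelianness then comes for free because $T_1$ is strongly regular and so has no nonzero nilpotents, a property inherited by every subring. Given $a\in R_1$, let $b\in T_1$ be the pseudo-inverse of $a$, so that $aba=a$, $bab=b$, $e:=ab=ba$ is a central idempotent of $T_1$, and $ae=a$, $eb=b$. Then $u:=a+(1_{T_1}-e)$ is a unit of $T_1$ with inverse $b+(1_{T_1}-e)$, so $(u^{-1},1_{T_2})$ is a unit of $S$. Left-multiplying $(a,0)\in R$ by this unit gives $(u^{-1}a,0)=(e,0)\in R$, and a second application to $(e,0)$ gives $(u^{-1}e,0)=(b,0)\in R$. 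Hence $b\in R_1$ with $aba=a$, proving that $R_1$ is regular.

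The main obstacle I expect is this last step: although $a\in R_1$, its pseudo-inverse $b$ lives a priori only in $T_1$ and is not itself a unit of $T_1$, so one cannot pull it into $R_1$ by any direct unit action. The device of perturbing $a$ by $1_{T_1}-e$ into a genuine unit $u$ of $T_1$, and then extracting $b$ in two stages through the intermediate central idempotent $e=u^{-1}a$, is the key technical move that closes the argument.
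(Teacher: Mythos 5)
Your proof is correct, and it shares the paper's backbone: the decomposition $S=T_1\times T_2$ of Lemma \ref{type}, the units of the form $(\pm 1_{T_1})\times u$, and the fact that every element of $T_2$ is a sum of two units, which forces $R_2=T_2$. Where the two arguments diverge is the regularity step. The paper notes that $T_2\subseteq R$ is a von Neumann regular ideal of $R$, shows $R/T_2$ is regular by unit-regularity of the abelian regular ring $T_1$ (for $x\in T_1$ there is a unit $u\in T_1$ with $x=xux$, and $u+1_{T_2}$ is a unit of $S$, hence lies in $R$), and then invokes \cite[Lemma 1.3]{Goodearl} to conclude that $R$ is regular, reading off abelianness of $R_1$ from centrality of idempotents in $T_1$. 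You instead prove regularity of $R_1$ componentwise: taking the group inverse $b\in T_1$ of $a\in R_1$ with central idempotent $e=ab=ba$, you observe that $u=a+(1_{T_1}-e)$ is a unit with inverse $b+(1_{T_1}-e)$ and pull first $e=u^{-1}a$ and then $b=u^{-1}e$ into $R_1$ by left multiplication with the unit $u^{-1}\times 1_{T_2}$. This uses the same structural input (strong/unit regularity of $T_1$) but bypasses the quotient argument and Goodearl's lemma, so it is a bit more self-contained; moreover, your explicit derivation of the splitting $R=R_1\times R_2$ from the unit action fills in a step the paper merely asserts ("we may write $R=R_1\times R_2$"). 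Both arguments, like the paper's intended application, tacitly use that the subring $R$ contains $1_S$.
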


\begin{proof} By Lemma \ref{type}, we have $S=T_1\times T_2$ where $T_1$ is an abelian regular self-injective ring and each element in $T_2$ is the sum of two units. As $R$ is a subring of $S$, we may write $R=R_1\times R_2$ where $R_1$ is a subring of $T_1$ and $R_2$ a subring of $T_2$. 

The assumption gives that all units of $S$ are also in $R$. Let us choose any $t_2\in T_2$.
We know that $t_2=\phi+\psi$, where $\phi,\psi$ are units in
$T_2$. Therefore, $1_{T_1}\times \phi$ and $1_{T_1}\times \psi$
are units in $S$ and thus, $(1_{T_1}\times \phi)\circ
(1_{R_1}\times 1_{R_2})\in R$ and $(1_{T_1}\times \psi)\circ
(1_{R_1}\times 1_{R_2})\in R$ by hypothesis. And this means that
$\phi\circ 1_{R_2}\in R_2$ and $\psi\circ 1_{R_2}\in R_2$. Thus,
$t_2=t_2\circ 1_{R_2}=(\phi\circ 1_{R_2})+(\psi\circ 1_{R_2})\in
R_2$. This shows that $T_2\subseteq R_2$ and hence $T_2=R_2$. In particular, this shows that $T_2\subseteq R$ and so $T_2$ is a von Neumann regular ideal of $R$. Since every abelian regular ring is unit regular \cite[Corollary 4.2]{Goodearl}, if we have any $x\in T_1$, there exists a unit $u\in T_1$ such that $x=xux$. Further, $u+1_{T_2}$ is a unit of $S$, so it is in $R$. This shows
that $R/T_2$ is von Neumann regular. Since $T_2$ is a von Neumann regular ideal and $R/T_2$ is a von Neumann regular ring, it follows from \cite[Lemma 1.3]{Goodearl} that $R$ is also a von Neumann regular ring.

Now, as $R=R_1\times R_2$, clearly both $R_1$ and $R_2$ are von Neumann regular rings. Since every idempotent of $R_1$ is an idempotent of $T_1$ and $T_1$ is abelian regular, every idempotent of $R_1$ is in the center of $T_1$ and consequently, in the center of $R_1$. Thus $R_1$ is an abelian regular ring. In the first paragraph we have seen that $R_2=T_2$. Thus $R_2$ is a von
Neumann regular right self-injective ring which is invariant under
left multiplication by elements in $S$.
\end{proof}

We are going to close this section by pointing out several interesting consequences of the above theorem.

\begin{prop}\label{no-F2}
Let $S$ be a right self-injective
von Neumann regular ring and $R$, a subring of $S$. Assume that $R$ is stable
under left multiplication by units of $S$. If $R$ has no homomorphic images isomorphic to $\mathbb{Z}_2$, then $R=S$.
\end{prop}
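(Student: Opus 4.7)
The plan is to reduce the problem to the abelian regular self-injective factor $T_1$ of $S$ produced by Theorem \ref{key2}, and then apply the known characterisation of when every element of a right self-injective ring is a sum of two units.

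First, from Theorem \ref{key2} I would write $S=T_1\times T_2$ and $R=R_1\times R_2$ with $R_2=T_2$, so the problem reduces to showing $R_1=T_1$. Next I would transfer the hypothesis from $R$ down to $T_1$: since $R_1$ is a ring direct factor of $R$, every quotient of $R_1$ is automatically a quotient of $R$, so $R_1$ itself has no homomorphic image isomorphic to $\mathbb{Z}_2$. If a maximal ideal $M$ of $T_1$ satisfied $T_1/M\cong\mathbb{Z}_2$, the composite $R_1\hookrightarrow T_1\twoheadrightarrow T_1/M$ would send $1$ to $1$, so its image would be a subring of $\mathbb{Z}_2$ containing $1$ and hence equal to $\mathbb{Z}_2$, forcing $R_1/(M\cap R_1)\cong\mathbb{Z}_2$, a contradiction. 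Therefore $T_1$ itself has no factor ring isomorphic to $\mathbb{Z}_2$.

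Now I would invoke the Khurana--Srivastava characterisation (essentially \cite{KS1}) that a right self-injective ring has every element a sum of two units precisely when it has no factor ring isomorphic to $\mathbb{Z}_2$. Applied to the right self-injective ring $T_1$, every element of $T_1$ is a sum of two units of $T_1$. On the other hand, $1\in R$ together with the stability of $R$ under left multiplication by units of $S$ forces every unit of $S$ to lie in $R$; projecting onto the first component, every unit of $T_1$ lies in $R_1$. Since $R_1$ is a subring, it is closed under addition, so it contains every sum of units of $T_1$, giving $T_1\subseteq R_1$. Hence $R_1=T_1$, and combined with $R_2=T_2$ we obtain $R=S$.

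The conceptual hard step is the sum-of-two-units characterisation; everything else is routine bookkeeping to push the $\mathbb{Z}_2$-free hypothesis from $R$ down to $T_1$ and then to lift the resulting additive representation back up to $R_1$. Should one prefer not to cite the characterisation as a black box, the alternative would be to rerun the Kaplansky type-theoretic analysis from the proof of Lemma \ref{type} inside $T_1$, using the extra $\mathbb{Z}_2$-free hypothesis to show that the abelian regular factors of type $I_f$ also enjoy the two-unit sum property.
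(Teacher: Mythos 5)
Your argument is correct, and it turns on the same key external ingredient as the paper's own proof, namely the Khurana--Srivastava characterisation \cite{KS1} that a right self-injective ring with no homomorphic image isomorphic to $\mathbb{Z}_2$ has every element a sum of two units; the difference is that you route everything through the decomposition $S=T_1\times T_2$ of Lemma \ref{type} and Theorem \ref{key2}, while the paper applies the characterisation to $S$ directly. Its proof is shorter: since $R$ is a unital subring of $S$, any ring surjection $S\rightarrow \mathbb{Z}_2$ restricts to a surjection $R\rightarrow \mathbb{Z}_2$ (this is exactly your maximal-ideal transfer argument, run on $S$ rather than on $T_1$), so $S$ itself has no factor ring isomorphic to $\mathbb{Z}_2$; then every $s\in S$ is a sum of two units, and stability of $R$ under left multiplication by units gives $s=s\cdot 1_R\in R$, hence $R=S$, with no need for Theorem \ref{key2} at all. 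Your detour is sound but costs a little: you use not just the statement of Theorem \ref{key2} but the finer facts established inside its proof ($R=R_1\times R_2$ with $R_1\subseteq T_1$ and $R_2=T_2$, compatibly with the fixed decomposition of $S$), so strictly you should either quote that part of the argument or re-derive it before projecting units of $T_1$ into $R_1$. What the reduction buys is only the (correct and mildly informative) observation that the whole issue is concentrated in the abelian factor $T_1$, since $T_2\subseteq R$ holds without any hypothesis on $\mathbb{Z}_2$-quotients; the conclusion itself is reached more economically by the direct argument.
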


\begin{proof}
Note that our hypothesis implies that $S$ has no homomorphic image
isomorphic to $\mathbb{Z}_2$ since
otherwise, if $\psi:S\rightarrow \mathbb{Z}_2$ is a ring
homomorphism, then $\psi|_R:R\rightarrow \mathbb{Z}_2$ would give
a ring homomorphism, contradicting our assumption. Therefore, each
element in $S$ is the sum of two units by \cite{KS1} and thus, $R$
is invariant under left multiplication by elements of $S$. But
then, calling $1_R$ to the identity in $R$, we get that $s=s\cdot
1_R\in R$ for each $s\in S$. Therefore, $R=S$.
\end{proof}

Let now $S$ be any ring and call $\Aut(S)$ the group of units of $S$. The canonical ring homomorphism $\nu:\mathbb{Z}\rightarrow S$ which takes $1_{\mathbb{Z}}$  to $1_S$ has kernel $0$ or $\mathbb{Z}_n$, for some $n\in\mathbb{N}$. In the first case, $S$ is called a ring of characteristic $0$ and, in the other, a ring of characteristic $n$. Let us denote $\mathbb Z$ by $\mathbb{Z}_0$. 

Throughout the rest of this section, let us denote by $S'$, the image of the group ring $\mathbb{Z}_n[\Aut(S)]$ inside $S$ under the ring homomorphism sending an element of $\Aut(S)$ to the corresponding element in $S$. 

Then $S'$ is a subring of $S$ consisting of those elements which can be written as a finite sum of units of $S$, where $n$ is the characteristic of the ring $S$. By construction, the subring $S'$ is invariant under left (or right) multiplication by units of $S$. The problem stated by Fuchs of characterizing endomorphism rings which are additively generated by automorphisms reduces then to characterizing when $S=S'$. From Theorem \ref{key2} and Proposition \ref{no-F2}, we deduce the following partial answers of this question.

\begin{cor}\label{reg1}
Let $S$ be a von Neumann regular and right self-injective ring of characteristic $n$. Then $S'$ is also a von Neumann regular ring.
\end{cor}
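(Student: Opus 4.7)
The plan is to deduce Corollary \ref{reg1} directly from Theorem \ref{key2}. All that is required is to check that $S'$, as a subring of $S$, satisfies the hypothesis of being stable under left multiplication by units of $S$; the conclusion that $S'$ is von Neumann regular then follows at once from that theorem.

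First I would observe that $S'$ is by construction a subring of $S$, being the image of the ring homomorphism $\mathbb{Z}_n[\Aut(S)]\to S$. In particular $S'$ contains $1_S$ and is closed under both addition and multiplication, and its typical element is a finite sum $\sum_{i} m_i u_i$ with $m_i \in \mathbb{Z}_n$ (interpreted as $\mathbb{Z}$ in the case $n=0$) and $u_i\in\Aut(S)$.

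Next I would verify stability under left multiplication by units of $S$. Given $u\in\Aut(S)$ and $x=\sum_i m_i u_i \in S'$, one has $ux=\sum_i m_i(u u_i)$. Since each product $uu_i$ is again a unit of $S$, the element $ux$ lies once more in $S'$. This is the entire verification; it was already recorded in the paragraph introducing $S'$, but it is precisely the hypothesis needed to invoke Theorem \ref{key2}.

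With both hypotheses of Theorem \ref{key2} in force, we conclude that $S'$ is von Neumann regular, and in fact that it splits as $S' = R_1\times R_2$ with $R_1$ abelian regular and $R_2$ right self-injective, although only the regularity is asserted in the statement. There is no genuine obstacle to overcome here: the substantive content sits in Lemma \ref{type} and Theorem \ref{key2}, and the present corollary simply packages $S'$ as a canonical instance to which those results apply.
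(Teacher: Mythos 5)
Your proposal is correct and matches the paper's argument: the paper likewise notes that $S'$ is by construction a subring of $S$ invariant under left multiplication by units, and then deduces the regularity of $S'$ immediately from Theorem \ref{key2}. Nothing further is needed.
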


\begin{cor}\label{reg2}
Let $S$ be a von Neumann regular and right self-injective ring of characteristic $n$. If $S$ has no homomorphic images isomorphic to $\mathbb{Z}_2$,
then $S=S'$. In particular, this is the case when $n>0$ and $2$ does not divide $n$.
\end{cor}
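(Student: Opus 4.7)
The plan is to bypass applying Proposition \ref{no-F2} directly to the subring $R=S'$, since that route would require verifying that $S'$ itself admits no homomorphism onto $\mathbb{Z}_2$, a condition that is not obviously transferred from $S$ to $S'$. Instead, I would extract the crucial ingredient used inside the proof of Proposition \ref{no-F2}, namely the additive unit representation theorem of Khurana--Srivastava \cite{KS1}, and apply it straight to $S$.

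More precisely, the first step is to invoke \cite{KS1}: since $S$ is a right self-injective von Neumann regular ring with no homomorphic image isomorphic to $\mathbb{Z}_2$, every element of $S$ can be written as the sum of two units of $S$. The second step is to observe that $S'$, by its very definition, contains $\Aut(S)$ and is closed under addition as a subring, so any finite sum of units of $S$ automatically lies in $S'$. Combining these two facts yields $S\subseteq S'$, and the reverse inclusion is obvious, so $S=S'$.

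For the \emph{in particular} clause, I would argue by contradiction: suppose ${\rm char}(S)=n>0$ with $2\nmid n$ and that $\phi:S\to \mathbb{Z}_2$ is a ring homomorphism. Then on the one hand $\phi(n\cdot 1_S)=\phi(0)=0$, while on the other $\phi(n\cdot 1_S)=n\cdot 1_{\mathbb{Z}_2}=1$ in $\mathbb{Z}_2$ (as $n$ is odd). This contradiction shows that the hypothesis of the main assertion holds automatically in odd positive characteristic, and the preceding conclusion then applies to give $S=S'$.

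The main obstacle is really not an obstacle once the right mechanism is identified: one might naturally try to apply Proposition \ref{no-F2} with $R=S'$, but transferring the ``no homomorphic image isomorphic to $\mathbb{Z}_2$'' hypothesis from $S$ down to $S'$ is not immediate, so the cleanest route is to call on \cite{KS1} directly, exactly as is done at the heart of the proof of Proposition \ref{no-F2}. Conceptually, Corollary \ref{reg1} is where Theorem \ref{key2} really does the work, whereas Corollary \ref{reg2} is essentially a repackaging of the Khurana--Srivastava two-unit-sum theorem in the $S'$-language.
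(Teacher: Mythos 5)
Your proof is correct, and at its core it is the same argument as the paper's: the paper dismisses the first assertion as ``an immediate consequence of Proposition \ref{no-F2}'', and the proof of that proposition is exactly the mechanism you use --- the Khurana--Srivastava theorem \cite{KS1} giving that every element of $S$ is a sum of two units once $S$ has no homomorphic image isomorphic to $\mathbb{Z}_2$, after which such sums land in $S'$ by the very definition of $S'$. Your caution about hypothesis transfer is well taken: as stated, Proposition \ref{no-F2} places the no-$\mathbb{Z}_2$-image condition on the subring $R$, while Corollary \ref{reg2} assumes it only for $S$, and passing that condition from $S$ down to $S'$ is not automatic; so quoting the proposition verbatim with $R=S'$ would need a word of justification, and inlining the \cite{KS1} step (i.e., using the proof of the proposition rather than its statement) is the cleaner reading of what the paper intends. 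For the ``in particular'' clause your argument coincides with the paper's: the paper notes that a homomorphism $\delta:S\rightarrow\mathbb{Z}_2$ would put both $2$ and $n$ in $\Ker(\delta\circ\nu)$, hence $1$ as well since $\gcd(2,n)=1$; evaluating $\phi(n\cdot 1_S)$ for odd $n$, as you do, is the same contradiction phrased differently.
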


\begin{proof}
The first part is an immediate consequence of Proposition \ref{no-F2}. For the second part, just note that if $\mathbb{Z}_n$ is the kernel of $\nu:\mathbb{Z}\rightarrow S$ for some $n>0$ and $2$ does not divide $n$, then $S$ cannot have any ring homomorphism $\delta:S\rightarrow \mathbb{Z}_2$ since otherwise, $2$ and $n$ would belong to $\Ker(\delta\circ\nu)$. And, as $2$ and $n$ are coprime, we would deduce that $1\in \Ker(\delta\circ\nu)$, contradicting that $\nu$ is a ring homomorphism.
\end{proof}

\bigskip

\section{Automorphism-invariant Modules}

\noindent Let us fix a non empty class of right $R$-modules
$\mathcal X$, closed under isomorphisms. We recall from \cite{E,Xu}
that an $\mathcal X$-{\em preenvelope} of a right module $M$ is a
morphism $u: M\rightarrow X$ with $X\in \mathcal{X}$ such that any
other morphism $g: M\rightarrow X^\prime$ with $X^\prime \in
\mathcal{X}$ factors through $u$. A preenvelope $u: M\rightarrow
X$ is called an $\mathcal{X}$-{\it envelope} if, moreover, it
satisfies that any endomorphism $h:X\rightarrow X$ such that
$h \circ u=u$ must be an automorphism. An
$\mathcal{X}$-(pre)envelope $u: M\rightarrow X$ is called {\it
monomorphic} if $u$ is a monomorphism. It is easy to check that this is the case when $\mathcal X$ contains the class of injective modules.

\begin{defi} \rm Let $M$ be a module and $\mathcal{X}$, a class of
$R$-modules closed under isomorphisms. We will say that $M$ is $\mathcal{X}$-{\it automorphism}-{\it
invariant} if there exists an $\mathcal{X}$-envelope $u: M\rightarrow
X$ satisfying that for any automorphism $g:
X\rightarrow X$ there exists an endomorphism $f:
M\rightarrow M$ such that $u\circ f=g\circ u$.
\end{defi}

\begin{remarks}\label{galois}\rm

(1) Let $M$ be an $\mathcal X$-automorphism-invariant module and $u:M\rightarrow X$, its monomorphic $\mathcal X$-envelope. Let us choose $g\in\Aut(X)$ and $f\in\End(M)$ such that $u\circ f=g\circ u$. As $g$ is an automorphism, there exists an $f'\in\End(M)$ such that $u\circ f'=g^{-1}\circ u$ and thus, $u\circ f\circ f'=u\circ f'\circ f=u$. Therefore, $f\circ f'$ and $f'\circ f$ are automorphisms by the definition of monomorphic envelope. This shows that $f\in\Aut(M)$.

\medskip

(2) The above definition is equivalent to assert that $M$ is invariant under the group action on $X$ given by $\Aut(X)$.
Moreover, in this case, (1) shows that the map
$$\Delta:\Aut(X)\rightarrow \Aut(M)$$
which assigns $g\mapsto f$
is a surjective group homomorphism whose kernel consists of those automorphisms $g\in\Aut(X)$ such that $g\circ u=u$.
This subgroup of $\Aut(X)$ is ussually called the {\em Galois group} of the envelope $u$ (see e.g. \cite{EEG}) and we will denote it by $\Gal(u)$. Therefore, we get that, for modules $M$ having monomorphic $\mathcal X$-envelopes, $M$ is $\mathcal X$-automorphism invariant precisely when the envelope $u$ induces a group isomorphism $\Aut(M)\cong \Aut(X)/Gal(u)$.

\medskip

(3) The above definition of $\mathcal X$-automorphism-invariant modules can be easily extended to modules having $\mathcal X$-preenvelopes. We have restricted our definition
to modules having  envelopes because these are the modules to which our results will be applied in practice.

\medskip

(4) If $\mathcal X$ is the class of injective modules, then $\mathcal{X}$-automorphism-invariant modules are precisely the automorphism-invariant modules studied in \cite{DF,ESS,GS1,GS2,LZ,SS2}.
\end{remarks}

The following definition is inspired by the notion of quasi-injective modules, as well as of strongly invariant module introduced in \cite[Definition, p. 430]{ZZ1}.

\begin{defi}\rm
Let $M$ be a module and $\mathcal{X}$, a class of modules closed
under isomorphisms. We will say that $M$ is $\mathcal
X$-endomorphism-invariant if there exists an $\mathcal X$-envelope
$u: M\rightarrow X$ satisfying that for any endomorphism
$g:X\rightarrow X$, there exists an endomorphism $f:M\rightarrow
M$ such that $g\circ u=u\circ f$.

\end{defi}

Note that if $\mathcal X$ is the class of injective modules, then
the $\mathcal{X}$-endomorphism-invariant modules are precisely the
quasi-injective modules \cite{JW}. Whereas if $\mathcal X$ is the class of pure-injective modules, the $\mathcal X$-endomorphism invariant modules are just the modules which are strongly invariant in their pure-injective envelopes in the sense of \cite[Definition, p. 430]{ZZ1}.

The following example shows that, in general, $\mathcal X$-automorphism-invariant modules need not be $\mathcal X$-endomorphism-invariant.

\begin{exam}\label{no-qi}
\rm Let $R$ be the ring of all eventually constant sequences
$(x_n)_{n \in \mathbb{N}}$ of elements in  $\mathbb{Z}_2$, and
$\mathcal X$, the class of injective right $R$-modules. As $R$ is
von Neumann regular, the class of injective $R$-modules coincides
with the class of pure-injective $R$-modules as well as with the
class of flat cotorsion $R$-modules (see \cite{Xu} for its
definition). The $\mathcal X$-envelope of $R_R$ is $u:
R_R\rightarrow X$ where $X=\prod_{n\in \mathbb{N}}\mathbb{Z}_2$.
Clearly, $X$ has only one automorphism, namely the identity
automorphism. Thus, $R_R$ is $\mathcal X$-automorphism-invariant
but it is not $\mathcal X$-endomorphism-invariant. Let us note
that this example also shows that the hypothesis that $S$ has no
homomorphic images isomorphic to $\mathbb{Z}_2$
cannot be removed from Proposition \ref{no-F2}.
\end{exam}

\begin{notation} \rm Along the rest of this section, $\mathcal{X}$ will denote a class of modules closed under isomorphisms and
$M$, a module with $u: M\rightarrow X$, a monomorphic
$\mathcal{X}$-envelope such that $\End(X)/J(\End(X))$ is a von Neumann regular right self-injective ring and idempotents lift modulo $J(\End(X))$.
\end{notation}

If $f:M\rightarrow M$ is an endomorphism, then we know by the definition of preenvelope that
$u\circ f$ extends to an endomorphism $g: X\rightarrow X$ such
that $g\circ u=u\circ f$. The following easy lemma asserts that this extension is unique modulo the Jacobson radical of $\End(X)$.

\begin{lem}\label{unique-J}
Let $f\in\End(M)$ be any endomorphism and assume that
$g,g'\in\End(X)$ satisfy that $g\circ u=u\circ f=g'\circ u$. Then
$g-g'\in J(\End(X))$.
\end{lem}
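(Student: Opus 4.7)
The plan is to reduce the statement to the defining property of an envelope by considering the difference $h = g - g'$ and using the Jacobson-radical characterization via units.

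First, I would set $h := g - g' \in \End(X)$. Subtracting the two equalities $g \circ u = u \circ f$ and $g' \circ u = u \circ f$ immediately gives $h \circ u = 0$, so $h$ kills the image of $u$. The goal is then to show $h \in J(\End(X))$.

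Next, I would verify the standard Jacobson-radical criterion: $h \in J(\End(X))$ provided $1_X + rh$ is a unit in $\End(X)$ for every $r \in \End(X)$. To this end, fix an arbitrary $r \in \End(X)$ and set $\varphi_r := 1_X + rh$. Then
\[
\varphi_r \circ u \;=\; u + r \circ (h \circ u) \;=\; u + 0 \;=\; u.
\]
Because $u : M \rightarrow X$ is an $\mathcal{X}$-envelope, any endomorphism of $X$ satisfying this fixing property must be an automorphism. Hence $\varphi_r = 1_X + rh$ is an automorphism, and in particular a unit of $\End(X)$. Since $r$ was arbitrary, the standard characterization of the Jacobson radical yields $h \in J(\End(X))$, which is exactly $g - g' \in J(\End(X))$.

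This is a short argument and I do not anticipate a genuine obstacle; the only point to be careful about is the direction of composition, namely that one premultiplies by $r$ (not postmultiplies) so that the factor $h \circ u = 0$ makes the correction term vanish when we test $\varphi_r \circ u$. The use of the envelope property in the form "any endomorphism of $X$ fixing $u$ is an automorphism" is precisely what the definition of envelope delivers, so no further hypothesis on $\End(X)$ (such as regularity modulo $J$ or lifting of idempotents) is needed for this particular lemma, even though those hypotheses are standing assumptions in the surrounding notation.
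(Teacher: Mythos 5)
Your proof is correct and follows essentially the same route as the paper: both compute $(1 \pm r\,(g-g'))\circ u = u$ using $(g-g')\circ u = 0$ and then invoke the defining property of an envelope to conclude these elements are automorphisms, hence $g-g' \in J(\End(X))$ by the standard radical criterion. The sign convention ($1+rh$ versus the paper's $1-t\circ(g-g')$) is the only, immaterial, difference.
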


\begin{proof}
In order to prove that $g-g'\in J(\End(X))$, we must
show that $1-t\circ (g-g^\prime)$ is an automorphism for any $t\in
\End(X)$. Let us note that $t\circ (g-g^\prime)\circ u=t\circ
(g\circ u-g^\prime \circ u)=t\circ u\circ (f-f)=0$. Therefore, $[1-t\circ
(g-g^\prime)]\circ u=u$. Therefore, $1-t\circ (g-g^\prime)$
is an automorphism by the definition of envelope.
\end{proof}

The above lemma shows that
we can define a ring homomorphism

$$
\varphi: \End(M)\longrightarrow \End(X)/J(\End(X))
$$
by the
rule $\varphi(f)=g+J(\End(X))$.
Call $K=\Ker \varphi$. Then $\varphi$ induces an injective
ring homomorphism
$$
\Psi: \End(M)/K\longrightarrow \End(X)/J(\End(X))
$$
which allows us to identify $\End(M)/K$ with the subring $\Im
\Psi\subseteq \End(X)/J(\End(X)).$

\begin{lem}\label{converselemma} Assume $j\in J=
J(\End(X))$. Then there exists an element $k\in K$
such that $u\circ k=j\circ u$.
\end{lem}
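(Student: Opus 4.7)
The plan is to produce the desired $k$ by exploiting the $\mathcal{X}$-automorphism-invariance of $M$ applied to the automorphism $1_X+j$ of $X$, which is invertible precisely because $j$ lies in the Jacobson radical.

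First I would observe that, since $j\in J(\End(X))$, the endomorphism $1_X+j$ is an automorphism of $X$. Because $M$ is $\mathcal{X}$-automorphism-invariant, there exists $f\in\End(M)$ with $u\circ f=(1_X+j)\circ u$. Subtracting $u=u\circ 1_M$ from both sides gives
\[
u\circ (f-1_M)=j\circ u.
\]
Thus setting $k:=f-1_M\in\End(M)$ realizes the first requirement $u\circ k=j\circ u$.

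Next I would verify that $k\in K=\Ker\varphi$. By construction, $j$ itself is an extension of $u\circ k$ along $u$ in the sense of the preenvelope property, so by the definition of $\varphi$ we have $\varphi(k)=j+J(\End(X))$; but $j\in J(\End(X))$ by hypothesis, hence $\varphi(k)=0$. (Note that Lemma~\ref{unique-J} guarantees $\varphi(k)$ is independent of the choice of extension, so it does not matter which lift of $u\circ k$ we picked.) Therefore $k\in K$, as required.

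There is essentially no obstacle here: the argument is a direct application of the automorphism-invariance hypothesis, and the only subtlety to mention is ensuring the chosen $k$ really lies in $K$, which is immediate from Lemma~\ref{unique-J} together with $j\in J(\End(X))$. This lemma will presumably be combined with Lemma~\ref{unique-J} in the sequel to identify $K$ with a suitable ideal of $\End(M)$ (very likely $J(\End(M))$), so the key point is that the surjection $\varphi$ hits all of $J(\End(X))/J(\End(X))=0$ through $K$.
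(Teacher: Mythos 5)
Your argument is correct and is essentially the paper's own proof: the paper applies automorphism-invariance to the unit $1-j$ and takes $k=1-f$, while you use $1+j$ and $k=f-1$, which is the same idea up to a sign. The verification that $k\in K$ via $\varphi(k)=j+J=0$ (well-defined by Lemma~\ref{unique-J}) matches the paper as well.
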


\begin{proof} Let $j\in J$. Then $1-j$ is invertible. Since $M$ is
$\mathcal{X}$-automorphism-invariant, there exists a homomorphism
$f: M\rightarrow M$ such that $u\circ f=(1-j)\circ u$. Now, as
$j=1-(1-j)$, we have that $j\circ u=u-(1-j)\circ u=u\circ (1-f)$.
Therefore, $u\circ(1-f)=j\circ u$ and this means that
$\varphi(1-f)=j+J=0+J$ and $1-f\in \Ker \varphi=K$.
\end{proof}

We can now state one of the main result of this section.

\begin{theor} \label{main1}
If $M$ is $\mathcal{X}$-automorphism-invariant, then $\End(M)/K$ is von Neumann regular, $K=J(\End(M))$ and idempotents in $\End(M)/J(\End(M))$ lift to idempotents in $\End(M)$.
\end{theor}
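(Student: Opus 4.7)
The plan is to transport information from $\End(X)/J(\End(X))$ back to $\End(M)$ through the injective ring homomorphism $\Psi\colon \End(M)/K\hookrightarrow \End(X)/J(\End(X))$ built just before the statement, applying the structural Theorem \ref{key2}. The main obstacle will be showing that $\Im\Psi$ is stable under left multiplication by units of $\End(X)/J(\End(X))$; once this is in place, everything else reduces to standard quasi-regularity and idempotent-lifting manipulations.

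First I would prove that $\Im\Psi$ is stable under left multiplication by units of $\End(X)/J(\End(X))$. The key preliminary observation is that every unit $\bar g$ of $\End(X)/J(\End(X))$ lifts to a unit $g\in\End(X)$: if $\bar g\bar{g'}=\bar 1=\bar{g'}\bar g$, then $gg'$ and $g'g$ lie in $1+J(\End(X))$ and are thus units in $\End(X)$, which forces $g$ to have both a left and a right inverse, so $g\in\Aut(X)$. The $\mathcal X$-automorphism-invariance of $M$ then supplies $f_g\in\End(M)$ with $u\circ f_g=g\circ u$, so that $\varphi(f_g)=\bar g$. For any $\bar h=\varphi(f)\in\Im\Psi$ it follows that $\bar g\bar h=\varphi(f_g\circ f)\in\Im\Psi$. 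Theorem \ref{key2} applied to $S=\End(X)/J(\End(X))$ and $R=\Im\Psi\cong\End(M)/K$ now yields that $\End(M)/K$ is von Neumann regular.

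Next I would establish $K=J(\End(M))$. For $K\subseteq J(\End(M))$, take $k\in K$; by the preenvelope property of $u$ there exists $j\in\End(X)$ with $j\circ u=u\circ k$, and $k\in\Ker\varphi$ forces $j\in J(\End(X))$. Then $1-j\in\Aut(X)$, and $\mathcal X$-automorphism-invariance gives $f\in\End(M)$ with $u\circ f=(1-j)\circ u=u\circ(1-k)$; the monomorphicity of $u$ yields $f=1-k$, and Remark \ref{galois}(1) ensures $f\in\Aut(M)$, so $1-k$ is a unit of $\End(M)$. Since $K$ is a two-sided ideal, the same argument applied with $tk$ in place of $k$ for any $t\in\End(M)$ shows $1-tk$ is a unit, whence $k\in J(\End(M))$. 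Conversely, since $\End(M)/K$ is von Neumann regular it has zero Jacobson radical, and then the canonical projection $\End(M)\twoheadrightarrow \End(M)/K$ forces $J(\End(M))\subseteq K$.

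Finally, for the lifting of idempotents, let $\bar f\in\End(M)/J(\End(M))=\End(M)/K$ be idempotent and pick a representative $f\in\End(M)$. Then $\varphi(f)\in\End(X)/J(\End(X))$ is an idempotent, which by hypothesis lifts to an idempotent $e\in\End(X)$. Choose $g\in\End(X)$ with $g\circ u=u\circ f$; then $g-e\in J(\End(X))$, and Lemma \ref{converselemma} provides $k\in K$ with $u\circ k=(g-e)\circ u=u\circ f-e\circ u$. Setting $f''=f-k\in\End(M)$ gives $u\circ f''=e\circ u$, and then
\[
u\circ (f'')^2=e\circ u\circ f''=e^2\circ u=e\circ u=u\circ f''
\]
together with the monomorphicity of $u$ forces $(f'')^2=f''$. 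Since $f''-f=-k\in K=J(\End(M))$, $f''$ is the desired idempotent lift of $\bar f$.
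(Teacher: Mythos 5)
Your proposal is correct and follows essentially the same route as the paper: stability of $\Im\Psi$ under left multiplication by units of $\End(X)/J(\End(X))$ plus Theorem \ref{key2} for regularity, the two-sided-ideal/quasi-regularity argument for $K=J(\End(M))$, and Lemma \ref{converselemma} together with monomorphicity of $u$ for lifting idempotents. The only (harmless) variation is in showing $K\subseteq J(\End(M))$, where you invoke Remark \ref{galois}(1) to see directly that $1-k$ is an automorphism, while the paper reruns the underlying envelope argument with $(1-g)^{-1}$.
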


\begin{proof} Call $S=\End(X)$, $J=J(\End(X))$ and $T=\Im \Psi\cong \End(M)/K$. We
want to show that we are in the situation of Theorem \ref{key2} to
get that $\End(M)/K$ is von Neumann regular. In order to prove it,
we only need to show that $T$ is invariant under left
multiplication by units of $S/J$. Let $g+J$ be a  unit of $S/J$.
Then $g: X\rightarrow X$ is an automorphism. And this means that
there exists an $f: M\rightarrow M$ such that $u\circ f=g\circ u$.
Therefore, $\Psi(f+K)=g+J$. Finally, if $\Psi(f^\prime+K)$ is
any element of $T$, we have that
$$
(g+J)\Psi(f^\prime
+K)=\Psi(f+K)\Psi(f^\prime+K)=\Psi(ff^\prime+K)\in \Im \Psi.
$$
This shows that $T$ is invariant under left multiplication by
units of $S/J$, namely $\End(M)/K$ is von Neumann regular.

As $\End(M)/K$ is von Neumann regular, $J(\End(M)/K)=0$ and so,
$J(\End(M))\subseteq K$. Let us prove the converse. As $K$ is a
two-sided ideal of $\End(M)$ it is enough to show that  $1-f$ is
invertible for every $f\in K$. Let $f\in K$ and let $g:
X\rightarrow X$ such that $g\circ u=u\circ f$. As $f\in K$, we get
that $g+J=\Psi(f+K)=0$. Therefore, $1-g$ is a unit in $S$. As
$(1-g)^{-1}: X\rightarrow X$ is an automorphism, there exists an
$h: M\rightarrow M$ such that $(1-g)^{-1}\circ u=u\circ h$. But
then
$$
u=(1-g)^{-1}\circ (1-g)\circ u=(1-g)^{-1}\circ u\circ (1-f)=u\circ
h\circ (1-f)
$$
and
$$
u=(1-g)\circ (1-g)^{-1}\circ u=(1-g)\circ u\circ h=u\circ
(1-f)\circ h.
$$
And, as $u:M\rightarrow X$ is a monomorphic envelope, this implies that both
$(1-f)\circ h$ and $h\circ (1-f)$ are automorphisms. Therefore,
$1-f$ is invertible.

Finally, let us prove that idempotents lift modulo $J(\End(M))$.
Let us choose an $f\in \End(M)$ such that $f+K=f^2+K$. Then there
exists a homomorphism $g: X\rightarrow X$ such that $g\circ
u=u\circ f$. Therefore, $g+J=\Psi(f+K)$. And, as $f+K$ is
idempotent in $\End(M)/K$, so is $g+J\in S/J$. As idempotents lift
modulo $J$, there exists an $e=e^2\in S$ such that $g+J=e+J$. Now
$g-e\in J$ implies that there exists an $k\in K$ such that
$(g-e)\circ u=u\circ k$ by Lemma \ref{converselemma}. Note that
$u\circ (f-k)=e\circ u$ and thus, $\varphi(f-k)=e+J$.

Therefore,
$$
u\circ (f-k)^2=e\circ u\circ (f-k)=e^2\circ u=e\circ u=u\circ
(f-k).
$$
And, as $u$ is monic, we get that $(f-k)^2=f-k$. This shows that idempotents
lift modulo $J(\End(M))$.
\end{proof}

\medskip
Recall that the notion of exchange property for modules was introduced by  Crawley and J\'{o}nnson in \cite{CJ}. A right $R$-module $M$ is said to satisfy the {\em exchange property} if for every right $R$-module $A$ and any two direct sum decompositions $A=M^{\prime}\oplus N=\oplus_{i\in \mathcal I}A_{i}$ with $M^{\prime} \simeq M$, there exist submodules $B_i$ of $A_i$ such that $A=M^{\prime} \oplus (\oplus_{i \in \mathcal I}B_i)$.
If this hold only for $|\mathcal I|<\infty$, then $M$ is said to satisfy the {\em finite exchange property}. Crawley and J\'{o}nnson raised the question whether the finite exchange property always implies the full exchange property but this question is still open. A ring $R$ is called an {\it exchange ring} if the module $R_R$ (or $_RR$) satisfies the (finite) exchange property. Warfield proved that a module $M$ has the finite exchange property if and only if $\End(M)$ is an exchange ring.

As a consequence of the above theorem, we have

\begin{cor} \label{finiteexchange}
If $M$ is $\mathcal{X}$-automorphism-invariant, then $M$ satisfies the finite exchange property.
\end{cor}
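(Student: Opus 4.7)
The plan is to deduce the finite exchange property directly from the structural information already established in Theorem \ref{main1}, using Warfield's characterization of the finite exchange property in terms of the endomorphism ring.

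First, I would recall Warfield's theorem: a right $R$-module $M$ satisfies the finite exchange property if and only if $\End(M)$ is an exchange ring. Thus the task reduces to verifying that $\End(M)$ is an exchange ring. For this I would invoke the well-known characterization (due to Nicholson, or equivalently Monk) that a ring $S$ is an exchange ring whenever $S/J(S)$ is von Neumann regular and idempotents lift modulo $J(S)$; in fact this is one of the standard sufficient conditions in the literature on exchange rings.

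Next, I would read off from Theorem \ref{main1} precisely the two ingredients needed. That theorem gives $K=J(\End(M))$ together with the fact that $\End(M)/K=\End(M)/J(\End(M))$ is von Neumann regular, and also that idempotents lift modulo $J(\End(M))$. Combining these with the characterization in the previous paragraph immediately yields that $\End(M)$ is an exchange ring, so by Warfield's theorem $M$ has the finite exchange property.

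I do not anticipate a real obstacle here, since all the heavy lifting has been done in Theorem \ref{main1}. The only point where one must be slightly careful is citing the right characterization: one should reference both Warfield's equivalence between finite exchange for $M$ and the exchange property of $\End(M)$, and the fact that von Neumann regularity of $\End(M)/J(\End(M))$ together with lifting of idempotents modulo $J(\End(M))$ forces $\End(M)$ itself to be exchange (since every von Neumann regular ring is clearly an exchange ring, and the exchange property passes from $\End(M)/J(\End(M))$ back to $\End(M)$ precisely when idempotents lift modulo the Jacobson radical). With these two classical facts invoked, the corollary follows in one line from Theorem \ref{main1}.
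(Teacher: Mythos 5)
Your proposal is correct and matches the paper's own argument: the paper likewise cites Warfield's equivalence (finite exchange for $M$ iff $\End(M)$ is an exchange ring) and then applies Nicholson's criterion \cite[Proposition 1.6]{Nichol}, using from Theorem \ref{main1} that $\End(M)/J(\End(M))$ is von Neumann regular and idempotents lift modulo $J(\End(M))$. No differences worth noting.
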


\begin{proof}
We have shown in Theorem \ref{main1} that $\End(M)/J(\End(M))$ is
a von Neumann regular ring and idempotents lift modulo
$J(\End(M))$. Then $\End(M)$ is an exchange ring by
\cite[Proposition 1.6]{Nichol}. This proves that $M$ has the
finite exchange property.
\end{proof}

Before proving our structure theorem for $\mathcal X$-automorphism-invariant modules, we need to state the following technical lemmas.

\begin{lem}\label{summands}
Let $M$ be an $\mathcal{X}$-automorphism-invariant module and assume that every direct summand of $M$ has an $\mathcal{X}$-envelope. If $N$ is a direct summand of $M$, then $N$ is also $\mathcal{X}$-automorphism-invariant.
\end{lem}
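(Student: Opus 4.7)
The plan is to lift the problem to $X$ via the $\mathcal{X}$-envelope of $M$ and exploit the envelope property of $N$ to realize $X_N$ as a summand of $X$. Write $M = N \oplus N'$ with inclusions $i_N, i_{N'}$ and projections $\pi_N, \pi_{N'}$. Let $u: M \to X$ be the monomorphic $\mathcal{X}$-envelope of $M$ and $u_N: N \to X_N$ the $\mathcal{X}$-envelope of $N$ furnished by hypothesis. To check $\mathcal{X}$-automorphism-invariance of $N$, I will take an arbitrary $g \in \Aut(X_N)$, extend it to an automorphism $\tilde g$ of $X$, pull back via the hypothesis on $M$ to get $f \in \End(M)$, and then restrict $f$ to $N$.

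First, using that $u_N$ is a preenvelope and $X \in \mathcal{X}$, factor $u \circ i_N = \alpha \circ u_N$ for some $\alpha: X_N \to X$; using that $u$ is a preenvelope and $X_N \in \mathcal{X}$, factor $u_N \circ \pi_N = \beta \circ u$ for some $\beta: X \to X_N$. A direct computation gives $\beta \circ \alpha \circ u_N = \beta \circ u \circ i_N = u_N \circ \pi_N \circ i_N = u_N$, so by the envelope property of $u_N$ the map $\beta \circ \alpha$ is an automorphism of $X_N$. This is the key technical point: I can then replace $\alpha$ by $\alpha \circ (\beta \circ \alpha)^{-1}$ and (using the identity just derived to check the relation $\alpha \circ u_N = u \circ i_N$ persists) assume without loss of generality that $\beta \circ \alpha = 1_{X_N}$. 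Thus $e := \alpha \circ \beta$ is an idempotent in $\End(X)$ splitting off a copy of $X_N$ as a direct summand.

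Now, given $g \in \Aut(X_N)$, put $\tilde g := \alpha \circ g \circ \beta + (1_X - \alpha \circ \beta)$. This is an automorphism of $X$ (inverse $\alpha \circ g^{-1}\circ \beta + (1_X - \alpha \circ \beta)$), and one immediately checks $\beta \circ \tilde g = g \circ \beta$. By the $\mathcal{X}$-automorphism-invariance of $M$, there exists $f \in \End(M)$ with $u \circ f = \tilde g \circ u$. Set $f_N := \pi_N \circ f \circ i_N \in \End(N)$. Then
\[
u_N \circ f_N = \beta \circ u \circ f \circ i_N = \beta \circ \tilde g \circ u \circ i_N = g \circ \beta \circ u \circ i_N = g \circ u_N \circ \pi_N \circ i_N = g \circ u_N,
\]
which is exactly what is needed to conclude that $u_N$ witnesses the $\mathcal{X}$-automorphism-invariance of $N$.

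The main obstacle is Step 2, where I have to promote the abstract factoring maps $\alpha$ and $\beta$ to a genuine split projection $X \twoheadrightarrow X_N$ inside $X$; everything else is a formal calculation. This requires the full envelope property of $u_N$ (not merely preenvelope) to turn $\beta \circ \alpha$ into an automorphism, after which the normalization $\beta \circ \alpha = 1_{X_N}$ makes the construction of $\tilde g$ and the restriction of $f$ completely routine.
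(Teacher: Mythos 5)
Your argument is correct, but it takes a genuinely different route from the paper. The paper writes $M=N\oplus L$, takes $\mathcal{X}$-envelopes $u_N:N\rightarrow X_N$ and $u_L:L\rightarrow X_L$ of \emph{both} summands, invokes \cite[Theorem 1.2.5]{Xu} to conclude that the induced map $M\rightarrow X_N\oplus X_L$ is again an $\mathcal{X}$-envelope of $M$, and then applies the automorphism-invariance of $M$ to the diagonal automorphism $(f,1_{X_L})$ to conclude $f(N)\subseteq N$. You instead stay with the given envelope $u:M\rightarrow X$ of $M$, use the two preenvelope factorizations $\alpha\circ u_N=u\circ i_N$ and $\beta\circ u=u_N\circ\pi_N$ together with the envelope property of $u_N$ to make $\beta\circ\alpha$ an automorphism, normalize to $\beta\circ\alpha=1_{X_N}$ so that $X_N$ splits off $X$, and extend $g\in\Aut(X_N)$ to $\tilde g=\alpha\circ g\circ\beta+(1_X-\alpha\circ\beta)\in\Aut(X)$; restricting the resulting $f$ to $N$ gives $u_N\circ(\pi_N\circ f\circ i_N)=g\circ u_N$, which is exactly the required property, and your final computation indeed only uses $\beta\circ u=u_N\circ\pi_N$ and $\beta\circ\tilde g=g\circ\beta$, so the normalization step is harmless. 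The two approaches are close in spirit (both realize $X_N$ as a summand of an envelope of $M$ and extend $g$ by the identity on a complement), but yours replaces the citation of Xu's direct-sum-of-envelopes theorem by an explicit splitting argument and uses only the envelope of the summand $N$ (plus the standing envelope of $M$), not the envelope of the complement $L$; the paper's version is shorter given the cited theorem and makes the containment $f(N)\subseteq N$ transparent, while yours is self-contained and slightly more economical in what it asks of the hypothesis that summands have envelopes.
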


\begin{proof}
Let us write $M=N\oplus L$ and let $u_N:N\rightarrow X_N$ and $u_L:L\rightarrow X_L$ be the $\mathcal X$-envelopes of $N, L$, respectively. Then the induced morphism $u:M=N\oplus L\rightarrow X_N\oplus X_L$ is also an $\mathcal X$-envelope by \cite[Theorem 1.2.5]{Xu}. Let now $f:X_N\rightarrow X_N$ be any automorphism and consider the diagonal automorphism 
$(f,1_{X_L}):X_N\oplus X_L\rightarrow X_N\oplus X_L$. As $M$ is $\mathcal X$-automorphism-invariant, we get that 
$(f,1_{X_L})(M)\subseteq M$. But this means that $f(N)\subseteq N$ by construction.
\end{proof}

\begin{lem}\label{sum-units}
Let $M$ be an $\mathcal X$-automorphism-invariant module and $u:M\rightarrow X$, its $\mathcal X$-envelope. If any element of $\End(X)/J(\End(X))$ is the sum of two units, then $M$ is $\mathcal X$-endomorphism-invariant.
\end{lem}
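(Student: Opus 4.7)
The plan is to show that for an arbitrary $g \in \End(X)$ we can produce some $f \in \End(M)$ with $u \circ f = g \circ u$. The hypothesis gives us the decomposition $g + J = (v_1 + J) + (v_2 + J)$ in $\End(X)/J$ with both summands units, and the idea is to lift this decomposition back to $\End(X)$, translate each unit piece to $\End(M)$ by the automorphism-invariance assumption, and then mop up the error term using Lemma \ref{converselemma}.

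First I would lift $v_1+J$ and $v_2+J$ to units of $\End(X)$ itself. This step is automatic in any ring: given a lift $v \in \End(X)$ of a unit of $\End(X)/J$, pick any lift $w$ of its inverse; then $vw = 1 - j$ for some $j \in J$, and since $1-j$ is a unit, $v$ has a right inverse, and symmetrically a left inverse, hence $v \in \Aut(X)$. So I obtain automorphisms $v_1, v_2 \in \Aut(X)$ with $j := g - v_1 - v_2 \in J(\End(X))$.

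Second, because $M$ is $\mathcal X$-automorphism-invariant, I can choose $f_1, f_2 \in \End(M)$ (automorphisms, in fact, by Remark \ref{galois}(1)) such that $u \circ f_i = v_i \circ u$ for $i=1,2$. Since $j \in J(\End(X))$, Lemma \ref{converselemma} gives an element $k \in K \subseteq \End(M)$ satisfying $u \circ k = j \circ u$. Setting $f = f_1 + f_2 + k$, a direct computation yields
\[
u \circ f = v_1 \circ u + v_2 \circ u + j \circ u = (v_1+v_2+j)\circ u = g\circ u,
\]
which is exactly the $\mathcal X$-endomorphism-invariance condition.

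There is no real obstacle in this argument; the only subtlety worth stating explicitly is the lifting of units modulo $J$, which holds in any ring and does not require the idempotent-lifting or self-injectivity hypotheses separately. Everything else is assembled directly from the standing assumptions on $u:M\to X$, the sum-of-two-units hypothesis, and Lemma \ref{converselemma}.
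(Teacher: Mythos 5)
Your proof is correct and follows essentially the same route as the paper: write $g+J(\End(X))$ as a sum of two units, lift these to automorphisms of $X$ (units lift modulo the Jacobson radical in any ring), and transfer them to $M$ via the automorphism-invariance hypothesis. The only difference is cosmetic: the paper absorbs the radical error term $j$ into the second unit (a unit plus an element of $J(\End(X))$ is again a unit, so every element of $\End(X)$ is itself a sum of two units and no separate treatment of $j$ is needed), whereas you dispose of $j$ via Lemma \ref{converselemma}; both handle it correctly.
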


\begin{proof}
We claim that any element in $\End(X)$ is also the sum of two units. Let $s\in \End(X)$. Then $s+J(\End(X))$ is the sum of two units, say $s+J(\End(X))= (s'+ s'')+J(\End(X))$. This yields that $s', s''$
are units in $\End(X)$ and there exists a $j\in J(\End(X))$ such
that $s=s'+(s''+j)$. Now, note that $s''+j$ is also a unit.

On the other hand, as any element in $\End(X)$ is the sum of two units, and $M$ is $\mathcal X$-automorphism-invariant, we get that
$M$ is $\mathcal X$-endomorphism-invariant.
\end{proof}

Let $M=N\oplus L$ be a decomposition of a module $M$ into two direct summands and call $v_N:N\rightarrow M, v_L:L\rightarrow M,\pi_N:M\rightarrow N,\pi_L:M\rightarrow L$ the associated structural injections and projections. We may associate to any homomorphism $f\in {\rm Hom}(N,L)$, the endomorphism $v_L\circ f\circ \pi_N$ of $M$, and thus identify ${\rm Hom}(N,L)$ with a subset of $\End(M)$. Similarly, we identify ${\rm Hom}(L,N)$ with a subset of $\End(M)$ as well. We will use these identifications in the following theorem:

\begin{theor} \label{struct-invar}
If $M$ is $\mathcal{X}$-automorphism-invariant and every direct summand of $M$ has an $\mathcal X$-envelope, then $M$ admits a decomposition $M=N\oplus L$ such that:
\begin{enumerate}
\item[(i)] $N$ is a square-free module.
\item[(ii)] $L$ is $\mathcal X$-endomorphism-invariant and $\End(L)/J(\End(L))$ is
von Neumann regular, right self-injective and idempotents lift
modulo $J(\End(L))$.
\item[(iii)] Both $\H_R(N,L)$ and $\H_R(L,N)$ are contained in $J(\End(M))$.
\end{enumerate}
In particular, $\End(M)/J(\End(M))$ is the direct product of an abelian regular ring and a right self-injective
von Neumann regular ring.
\end{theor}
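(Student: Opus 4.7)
The plan is to combine Theorem~\ref{main1} with the structure theorem Theorem~\ref{key2} to decompose $\End(M)/J(\End(M))$ as a product of two rings with complementary properties, and then transfer this decomposition to the module $M$ via idempotent lifting. Theorem~\ref{main1} embeds $\End(M)/J(\End(M))$ as a subring $T$ of $S=\End(X)/J(\End(X))$ stable under left multiplication by units of $S$. Applying Theorem~\ref{key2}, one obtains $T=R_{1}\times R_{2}$ with $R_{1}$ abelian regular and $R_{2}$ right self-injective von Neumann regular. Let $\bar e$ be the central idempotent of $T$ cutting out $R_{1}$; since idempotents lift modulo $J(\End(M))$ by Theorem~\ref{main1}, I lift $\bar e$ to an idempotent $e\in\End(M)$ and set $N=eM$, $L=(1-e)M$, so that $M=N\oplus L$. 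The Peirce decomposition then identifies $\End(N)/J(\End(N))\cong\bar eT\bar e=R_{1}$ and $\End(L)/J(\End(L))\cong R_{2}$, with idempotents lifting in both corner rings as an inherited property.

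Properties (ii) and (iii) fall out comfortably. For (iii), any $f\in\H_{R}(N,L)$ corresponds to the endomorphism $\tilde f=v_{L}\circ f\circ\pi_{N}=(1-e)\tilde f e\in\End(M)$, and since $\bar e$ is central in $T$ we have $\overline{\tilde f}=(1-\bar e)\overline{\tilde f}\bar e=\overline{\tilde f}\,\bar e(1-\bar e)=0$; so $\tilde f\in J(\End(M))$, and $\H_{R}(L,N)$ is handled symmetrically. For (ii), Lemma~\ref{summands} gives that $L$ is $\mathcal X$-automorphism-invariant, and since every element of $R_{2}$ is a sum of two units (by the construction of $T_{2}$ in Lemma~\ref{type}), Lemma~\ref{sum-units} upgrades this to $\mathcal X$-endomorphism-invariance; the regularity and right self-injectivity of $\End(L)/J(\End(L))\cong R_{2}$ are already in hand. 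The final clause is then just the rewriting $\End(M)/J(\End(M))=R_{1}\times R_{2}$.

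The main obstacle is (i), where the ring-theoretic property ``$R_{1}$ is abelian regular'' has to be converted into ``$N$ is square-free''. I would argue by contradiction: assume $N$ admits a direct summand of the form $A\oplus A'$ with an isomorphism $\phi\colon A\to A'$ and $A\ne 0$, and introduce the projections $e_{A},e_{A'}\in\End(N)$ together with $f=v_{A'}\phi\pi_{A}$ and $g=v_{A}\phi^{-1}\pi_{A'}$, which satisfy $gf=e_{A}$ and $fg=e_{A'}$. Modulo $J(\End(N))\cong R_{1}$ the idempotents $\bar e_{A},\bar e_{A'}$ remain nonzero (an idempotent in the Jacobson radical is zero) and become central by abelianness of $R_{1}$, so $\bar f=\bar e_{A'}\bar f\bar e_{A}=\bar f\bar e_{A}\bar e_{A'}=0$, whence $\bar e_{A}=\overline{gf}=0$, a contradiction. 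A secondary technicality to verify carefully is the inheritance of idempotent lifting from $\End(M)$ to $\End(L)$, though this is standard for corner rings modulo an ideal.
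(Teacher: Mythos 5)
Your skeleton is the same as the paper's: embed $\End(M)/J(\End(M))$ as the subring $\Im\Psi$ of $S/J(S)$ (notation of Section 3), apply Theorem \ref{key2} to get $R_1\times R_2$, lift the central idempotent to $e\in\End(M)$, set $N=eM$, $L=(1-e)M$, read off (iii) and the corner identifications from centrality of $\bar e$, and prove square-freeness of $N$ by the orthogonal-idempotents-plus-centrality contradiction; your argument for (i) with $f=v_{A'}\phi\pi_A$, $g=v_A\phi^{-1}\pi_{A'}$ is exactly the paper's argument in different clothing, and (iii) and the final clause are fine. The one genuine gap is in (ii). Lemma \ref{sum-units} requires that every element of $\End(X_L)/J(\End(X_L))$ be a sum of two units, where $u_L\colon L\rightarrow X_L$ is the $\mathcal X$-envelope of $L$; what you have established is the two-units property for $R_2\cong\End(L)/J(\End(L))$, which is a different ring (compare the other summand: $\End(N)/J(\End(N))\cong R_1$ is abelian regular, while $\End(X_N)/J(\End(X_N))$ need not be). So the lemma does not apply as you invoke it, and some argument linking the module-level decomposition to the envelope-level one is missing.

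The gap is repairable along your lines. By \cite[Theorem 1.2.5]{Xu} (already used in Lemma \ref{summands}) the envelope of $M=N\oplus L$ is $u_N\oplus u_L\colon M\rightarrow X_N\oplus X_L$, so we may take $X=X_N\oplus X_L$; the projection $\epsilon\in S=\End(X)$ onto $X_L$ satisfies $\epsilon\circ u=u\circ(1-e)$, hence $\Psi(\overline{1-e})=\bar\epsilon$ in $S/J(S)=T_1\times T_2$. Since $\Psi$ is unital and, in the proof of Theorem \ref{key2}, $\Im\Psi=R_1\times R_2$ with $R_1\subseteq T_1$ and $R_2=T_2$, the central idempotent $\Psi(\overline{1-e})$ cutting out $R_2$ is $(0,1_{T_2})$; therefore $\End(X_L)/J(\End(X_L))\cong\bar\epsilon\,(S/J(S))\,\bar\epsilon=T_2$, every element of which is a sum of two units, and only now does Lemma \ref{sum-units} (applied to $L$, which is $\mathcal X$-automorphism-invariant by Lemma \ref{summands}) give (ii). The paper sidesteps this detour entirely: it deduces that $L$ is $\mathcal X$-endomorphism-invariant directly from the clause in Theorem \ref{key2} that $R_2$ is invariant under left multiplication by \emph{all} elements of $S/J(S)$, not merely by units. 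Your secondary worry about idempotent lifting in the corners is harmless here: either note that $\End(M)$ is semiregular (equivalently, an exchange ring, by Theorem \ref{main1} and Nicholson) and this passes to corner rings, or do as the paper does and apply Lemma \ref{summands} together with Theorem \ref{main1} to $N$ and $L$ themselves.
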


\begin{proof}
Let us call $S=\End(X)$ and decompose, as in Lemma \ref{type},
$S/J(S)=T_1\times T_2$, where $T_1$ is an abelian regular
self-injective ring and $T_2$ is a right self-injective ring in
which every element is the sum of two units. Let
$\Psi:\End(M)/J(\End(M))\rightarrow S/J(S)$ be the injective ring
homomorphism defined in the beginning of this section. Identifying
$\End(M)/J(\End(M))$ with $\Im(\Psi)$, we get that
$\End(M)/J(\End(M))$ is a subring of $S/J(S)$ invariant under left
multiplication by units of $S/J(S)$. Using now Theorem
\ref{key2}, we obtain that $\End(M)/J(\End(M))=R_1\times
R_2$, where $R_1$ is an abelian regular ring and
$R_2$ is a right self-injective von Neumann regular ring which is invariant under
left multiplication by elements in $S/J(S)$. Let
$e+J(\End(M))$ be a central idempotent of $\End(M)/J(\End(M))$
such that $R_1=e\cdot \End(M)/J(\End(M))$ and $R_2=(1-e)\cdot
\End(M)/J(\End(M))$. As idempotents lift modulo $J(\End(M))$, we
may choose $e$ to be an idempotent of $\End(M)$. Call then $N=eM$
and $L=(1-e)M$. Note first that both $\H_R(N,L)$ and $\H_R(L,N)$
are contained in 
$J(\End(M))$ since 
$$\End(M)/J(\End(M))=[e\cdot \End(M)/J(\End(M))]\times [(1-e)\cdot
\End(M)/J(\End(M))].$$ This shows (iii).
Moreover, as $e+J(\End(M))$ is central in $\End(M)/J(\End(M))$, we
get that 
$$\End(N)/J(\End(N))\cong R_1$$ 
and 
$$\End(L)/J(\End(L))\cong R_2.$$ 

\noindent Now, as both $N, L$ are direct summands of $M$, they are $\mathcal X$-automorphism invariant by Lemma \ref{summands}. In particular, we get that idempotents in $\End(N)/J(\End(N))$ and in $\End(L)/J(\End(L))$ lift to idempotents in $\End(N)$ and $\End(L)$, respectively. Next, we claim that $N$ is square-free. Assume to the contrary that $N=N_1\oplus N_2\oplus N_3$ with $N_1\cong N_2\neq 0$ and let $e_1,e_2,e_3\in \End(N)$ be   orthogonal idempotents such that $N_i=e_iN$ for each $i=1,2,3$. Then $e_1R_1\cong e_2R_1$. Let $\phi:e_1R_1\rightarrow e_2R_1$ be an isomorphism and call $e_2r=\phi(e_1)$. As each idempotent in $R_1$ is central, we get that $\phi(e_1)=\phi(e^2_1)=e_2re_1=re_1e_2=0$. This yields a contradiction, since $e_1,e_2$ are nonzero idempotents in $\End(N)$. 
Thus we have proved (i). Finally, as $R_2$ is invariant under left multiplication by elements in $S/J(S)$, it follows that $L$ is $\mathcal X$-endomorphism-invariant. This proves (ii), thus completing the proof of the theorem. 
\end{proof}

Our next theorem extends \cite[Theorem 11]{ZZ1} and gives new examples of modules satisfying the full exchange property. Let us note that, for instance, Example \ref{no-qi} is not covered by \cite[Theorem 11]{ZZ1}.

\begin{theor} \label{fullexchange}
Let $M$ be an $\mathcal{X}$-automorphism-invariant module and assume every direct summand of $M$ has an $\mathcal X$-envelope. Assume further that for $\mathcal X$-endomorphism-invariant modules, the finite exchange property implies the full exchange property. Then $M$ satisfies the full exchange property.
\end{theor}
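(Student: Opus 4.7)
The plan is to exploit the structural decomposition provided by Theorem \ref{struct-invar} and handle the two resulting pieces separately. By that theorem, $M$ admits a decomposition $M = N \oplus L$ in which $N$ is square-free and $L$ is $\mathcal{X}$-endomorphism-invariant, with $\End(L)/J(\End(L))$ von Neumann regular and right self-injective. Since the full exchange property is well known to be preserved under finite direct sums (Crawley--J\'onsson), it suffices to establish it separately for $N$ and for $L$.

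For the summand $L$: as a direct summand of $M$, it is itself $\mathcal{X}$-automorphism-invariant by Lemma \ref{summands}, and hence satisfies the finite exchange property by Corollary \ref{finiteexchange}. Since $L$ is moreover $\mathcal{X}$-endomorphism-invariant, the standing hypothesis of the theorem -- that for $\mathcal{X}$-endomorphism-invariant modules the finite exchange property implies the full exchange property -- applies directly and yields the full exchange property for $L$.

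For the square-free summand $N$: the same two inputs (Lemma \ref{summands} together with Corollary \ref{finiteexchange}) give that $N$ has the finite exchange property, and from the proof of Theorem \ref{struct-invar} we already know that $\End(N)/J(\End(N))$ is an abelian regular ring in which idempotents lift modulo the Jacobson radical. Here one invokes the known fact that a square-free module with the finite exchange property automatically enjoys the full exchange property -- equivalently, that an exchange ring whose idempotents modulo its Jacobson radical are all central is a full exchange ring. Combining the two cases, $M = N \oplus L$ has the full exchange property.

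The main obstacle is the square-free case: the passage from finite to full exchange for $N$ is not produced by the unit-structure machinery developed in Section 2 and must be imported from the exchange-ring literature. Everything else -- the decomposition $M = N \oplus L$, the $\mathcal{X}$-automorphism-invariance of each summand, the finite exchange property for both, and the structure of $\End(L)/J(\End(L))$ and $\End(N)/J(\End(N))$ -- is already supplied by Theorem \ref{struct-invar}, Lemma \ref{summands}, and Corollary \ref{finiteexchange}, so the argument is a short assembly of these pieces together with the hypothesis on $\mathcal{X}$-endomorphism-invariant modules.
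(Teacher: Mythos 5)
Your argument is correct and essentially the paper's own proof: decompose $M=N\oplus L$ via Theorem \ref{struct-invar}, apply the hypothesis to the $\mathcal{X}$-endomorphism-invariant summand $L$, invoke Nielsen's theorem (that a square-free module with the finite exchange property has the full exchange property) for $N$, and combine using closure of the full exchange property under finite direct sums. The only immaterial difference is that the paper gets the finite exchange property of $N$ and $L$ simply by restricting it from $M$ (Corollary \ref{finiteexchange} together with the fact that finite exchange passes to direct summands), rather than reapplying Lemma \ref{summands} and Corollary \ref{finiteexchange} to each summand, which would additionally require noting that the standing hypotheses on $\End(X)/J(\End(X))$ pass to the envelopes of the summands.
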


\begin{proof}
By the theorem above, we have the decomposition $M=N\oplus L$
where $N$ is a square-free module and $L$ is an $\mathcal
X$-endomorphism-invariant module. Now, in Corollary
\ref{finiteexchange}, we have seen that $M$ satisfies the finite
exchange property. Therefore, both $N$ and $L$ satisfy the finite
exchange property. By our hypothesis, $L$ satisfies the full exchange
property. It is known that for a square-free module, the finite
exchange property implies the full exchange property \cite[Theorem
9] {Nielsen}. Since a direct sum of two modules with the full
exchange property also has the full exchange property, it follows
that $M$ satisfies the full exchange property.
\end{proof}

Recall that a ring $R$ is called a {\it clean ring} if each
element $a\in R$ can be expressed as $a=e+u$ where $e$ is an
idempotent in $R$ and $u$ is a unit in $R$ \cite{Nichol}. A module $M$ is called
a {\it clean module} if $\End(M)$ is a clean ring.

\begin{theor} \label{clean}
Let $M$ be an $\mathcal{X}$-automorphism-invariant module and assume that every direct summand of $M$ has an $\mathcal X$-envelope. Then $M$ is a clean module.
\end{theor}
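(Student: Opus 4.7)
The plan is to reduce the problem to a statement about the factor ring $\End(M)/J(\End(M))$ and then lift cleanness through the Jacobson radical, using the idempotent-lifting property already established in Theorem \ref{main1}.

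First, I would invoke Theorem \ref{struct-invar} to decompose $M = N \oplus L$ with $\H_R(N,L), \H_R(L,N) \subseteq J(\End(M))$, and, more importantly, to obtain the ring isomorphism
\[
\End(M)/J(\End(M)) \;\cong\; R_1 \times R_2,
\]
where $R_1$ is an abelian von Neumann regular ring and $R_2$ is a von Neumann regular right self-injective ring. The next step is to observe that each of these two factors is a clean ring: $R_1$, being abelian regular, is unit regular, and every unit regular ring is clean (a result of Camillo--Khurana); while $R_2$ is clean because every right self-injective ring is clean (a theorem of Camillo--Yu / Nicholson--Varadarajan). Since a finite direct product of clean rings is clean, we conclude that $\End(M)/J(\End(M))$ is a clean ring.

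Having disposed of the factor ring, I would then combine this with Theorem \ref{main1}, which states that idempotents lift modulo $J(\End(M))$. The remaining step is the standard lemma: if $R/J(R)$ is clean and idempotents lift modulo $J(R)$, then $R$ itself is clean. Given any $f \in \End(M)$, one writes $f + J(\End(M)) = \bar e + \bar u$ with $\bar e$ idempotent and $\bar u$ a unit in $\End(M)/J(\End(M))$, lifts $\bar e$ to an idempotent $e \in \End(M)$, and then $f - e$ reduces mod $J(\End(M))$ to a unit, hence is itself a unit in $\End(M)$. This gives the required clean decomposition of $f$, so $\End(M)$ is clean, which is exactly the assertion that $M$ is a clean module.

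The main obstacle is not technical but bibliographic: one must be willing to cite the existing structure results that abelian regular rings and right self-injective regular rings are clean, rather than prove them from scratch. Once those two facts are granted, the rest of the argument is a mechanical combination of Theorems \ref{main1} and \ref{struct-invar} with the well-known lift-through-$J$ principle for clean rings; no new module-theoretic manipulation is required.
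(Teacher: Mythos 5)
Your proposal is correct and follows essentially the same route as the paper's own proof: both decompose $\End(M)/J(\End(M))$ via Theorem \ref{struct-invar} into an abelian regular factor and a right self-injective regular factor, cite that each factor (and hence the product) is clean, and then lift cleanness to $\End(M)$ using the idempotent-lifting property from Theorem \ref{main1} together with Nicholson's criterion. The only difference is that you spell out the standard lift-through-$J$ argument and name specific references for the cleanness of the two factors, which the paper simply cites as known.
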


\begin{proof}
By Theorem \ref{struct-invar}, $\End(M)/J(\End(M))$ is the direct product of an abelian regular ring and a right self-injective
von Neumann regular ring. We know that abelian regular rings and right self-injective rings are clean. Since direct product of clean rings is clean, it follows that $\End(M)/J(\End(M))$ is clean. We have also shown in
Theorem \ref{main1} that idempotents in $\End(M)/J(\End(M))$ lift to idempotents in $\End(M)$. Therefore, $\End(M)$ is a clean ring by Nicholson \cite[Page 272]{Nichol}. Thus $M$ is a clean module.
\end{proof}

In particular, when $\End(M)$ has no homomorphic images isomorphic
to ${\mathbb Z}_2$, we have:

\begin{theor}\label{F2-inv}
Let $M$ be $\mathcal{X}$-automorphism-invariant. If $\End(M)$ has
no homomorphic images isomorphic to $\mathbb{Z}_2$, then $M$ is $\mathcal X$-endomorphism-invariant and
therefore, $$\End(M)/J(\End(M))= \End(X)/J(\End(X)).$$ In
particular, $\End(M)/J(\End(M))$ is von Neumann regular, right
self-injective and idempotents lift modulo $J(\End(M))$.

This is the case when $char(\End(M))=n>0$ and $2\nmid n$.
\end{theor}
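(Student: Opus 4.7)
The plan is to deduce Theorem \ref{F2-inv} from Proposition \ref{no-F2} applied to the injective ring homomorphism
\[
\Psi\colon \End(M)/J(\End(M)) \hookrightarrow \End(X)/J(\End(X))
\]
already constructed in Section~3. Recall from Theorem \ref{main1} that $K=\Ker\varphi$ coincides with $J(\End(M))$, so this $\Psi$ is well defined, and that its image $\Im\Psi$ is stable under left multiplication by units of $\End(X)/J(\End(X))$; both facts were verified in the course of the proof of Theorem \ref{main1} and will be the hooks on which everything hangs.

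First I would transfer the hypothesis from $\End(M)$ to the quotient $\End(M)/J(\End(M))$: any ring homomorphism from $\End(M)/J(\End(M))$ onto $\mathbb{Z}_2$ would pull back through the canonical surjection to one on $\End(M)$, which is ruled out by assumption. Identifying $\End(M)/J(\End(M))$ with $\Im\Psi$, Proposition \ref{no-F2} then applies with $S=\End(X)/J(\End(X))$ and $R=\Im\Psi$, and forces $\Im\Psi = \End(X)/J(\End(X))$, giving the claimed equality of rings. Consequently $\End(X)/J(\End(X))$ itself has no homomorphic image isomorphic to $\mathbb{Z}_2$, so by \cite{KS1} every element of it is a sum of two units, and Lemma \ref{sum-units} then immediately yields that $M$ is $\mathcal{X}$-endomorphism-invariant. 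The ``in particular'' assertions are now transparent: the standing hypotheses on $X$ fixed in the Notation preceding Lemma \ref{unique-J} (von Neumann regularity, right self-injectivity, and lifting of idempotents modulo the Jacobson radical of $\End(X)$) transfer verbatim to $\End(M)/J(\End(M))$ via the equality just obtained.

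For the final sentence I would mimic the argument used in Corollary \ref{reg2}: if $\mathrm{char}(\End(M))=n>0$ with $2\nmid n$, then any hypothetical ring homomorphism $\delta\colon \End(M)\to \mathbb{Z}_2$ must annihilate both $n\cdot 1$ (since $n\cdot 1=0$ in $\End(M)$) and $2\cdot 1$ (since $\mathbb{Z}_2$ has characteristic $2$); as $\gcd(n,2)=1$, a Bezout combination lies in $\ker\delta$, forcing $1\in\ker\delta$, a contradiction. Hence the hypothesis of the main part of the theorem is satisfied and the conclusion follows. I do not anticipate any substantive obstacle, since the ring-theoretic heavy lifting was already done in Proposition \ref{no-F2} and in the construction of $\Psi$ in Theorem \ref{main1}; the present statement is essentially a clean packaging of those results together with Lemma \ref{sum-units}.
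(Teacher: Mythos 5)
Your proposal is correct and follows essentially the same route as the paper: transfer the no-$\mathbb{Z}_2$-image hypothesis to $\End(M)/J(\End(M))$, apply Proposition \ref{no-F2} to $\Im\Psi\subseteq \End(X)/J(\End(X))$ to get the equality, conclude every element of $\End(X)/J(\End(X))$ is a sum of two units, invoke Lemma \ref{sum-units}, and handle the characteristic case as in Corollary \ref{reg2}. One small correction: the lifting of idempotents modulo $J(\End(M))$ does not ``transfer verbatim'' through the equality of quotient rings (it is a statement about the surjection $\End(M)\to\End(M)/J(\End(M))$, not about the quotient alone); it is instead already part of the conclusion of Theorem \ref{main1}, which you have in hand since $M$ is $\mathcal{X}$-automorphism-invariant.
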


\begin{proof}
Assume that $\End(M)$ has no homomorphic images isomorphic to
$\mathbb{Z}_2$. Then neither has
$\End(M)/J(\End(M))$ and thus, we deduce that
$$\End(M)/J(\End(M))=\End(X)/J(\End(X))$$ by Proposition
\ref{no-F2}. The proof of Proposition \ref{no-F2} shows that
$\End(X)/J(\End(X))$ has no homomorphic images isomorphic to
$\mathbb{Z}_2$ and thus, any element in
$\End(X)/J(\End(X))$ is the sum of two units. Applying now Lemma \ref{sum-units}, we deduce that 
$M$ is $\mathcal X$-endomorphism-invariant.

Finally, if $char(\End(M))=n>0$ and $2\nmid n$, then $\End(M)$ cannot have homomorphic images isomorphic to $\mathbb{Z}_2$ for the same reason as in Corollary \ref{reg2}.
\end{proof}

\bigskip

\section{Automorphism-coinvariant modules}

\noindent We will devote this section to dualize the results
obtained in Section 3. Let $M$ be a module and $\mathcal{X}$, a
class of $R$-modules closed under isomorphisms. A homomorphism $p:
X\rightarrow M$ is an $\mathcal{X}$-{\it precover} if any other
$g:X^\prime\rightarrow M$ with $X^\prime\in \mathcal{X}$
factorizes through it. And an $\mathcal{X}$-precover is called an
$\mathcal{X}$-{\it cover} if, moreover, any $h: X\longrightarrow
X$ such that $p \circ h=p$ must be an automorphism \cite{Xu}. An
$\mathcal{X}$-cover $p: X\longrightarrow M$ is called {\it
epimorphic} if $p$ is an epimorphism.

\begin{defi} \rm Let $M$ be a module and $\mathcal{X}$, a class of
$R$-modules closed under isomorphisms. We will say that $M$ is $\mathcal{X}$-{\it automorphism}-{\it
coinvariant} if there exists an $\mathcal{X}$-cover $p: X\rightarrow
M$ satisfying  that
 for any automorphism $g: X\rightarrow X$,
there exists an endomorphism $f: M\rightarrow M$ such that
$f \circ p=p \circ g$.
\end{defi}

\begin{remarks}\label{dualgalois}\rm

(1) As in Remarks \ref{galois}, the above definition can be easily extended to modules having $\mathcal X$-precovers. Moreover, if $p:X\rightarrow M$ is an epimorphic cover, then $M$ is $\mathcal X$-automorphism invariant precisely when the cover $p$ induces a group isomorphism $\Delta':\Aut(M)\cong \Aut(X)/\coGal(X)$, where $\coGal(X)=\{g\in\Aut(X)\,|\,p\circ g=p\}$ is usually called the {\em co-Galois} group of the cover $p$ (see e.g. \cite{EGJO,EEG}).

\medskip
(2) If $\mathcal X$ is the class of projective modules, then $\mathcal{X}$-automorphism-coinvariant modules are precisely dual
automorphism-invariant modules which have projective covers studied in \cite{SS1}.
\end{remarks}

The following definition is inspired by the notion of quasi-projective modules.

\begin{defi}\rm
Let $M$ be a module and $\mathcal{X}$, a class of modules closed
under isomorphisms. We will say that $M$ is $\mathcal
X$-endomorphism-coinvariant if there exists an $\mathcal X$-cover
$p: X\rightarrow M$ satisfying that for any endomorphism
$g:X\rightarrow X$, there exists an endomorphism $f:M\rightarrow
M$ such that $p\circ g=f\circ p$.
\end{defi}

Note that if $\mathcal X$ is the
class of projective modules, then the
$\mathcal{X}$-endomorphism-coinvariant modules are precisely the
quasi-projective modules \cite{WJ}.

\begin{notation} \rm Throughout this section $\mathcal{X}$ will be a class of modules closed under isomorphisms,
$M$ a module with $p: X\rightarrow M$ an epimorphic
$\mathcal{X}$-cover such that $\End(X)/J(\End(X))$ is a von Neumann regular, right self-injective ring and idempotents lift modulo $J(\End(X))$.
\end{notation}

 If $f:M\rightarrow M$ is an endomorphism, then there exists a $g:
X\rightarrow X$ such that $p\circ g=f\circ p$. Moreover, if
$g^\prime :X\rightarrow X$ also satisfies that $p\circ g^\prime =
f\circ p$, then we get that $p\circ (g-g^\prime)=0$. Thus, $p\circ
(g-g^\prime)\circ t=0$ for any $t\in S=\End(X)$ and this means
that $p\circ (1-(g-g^\prime)\circ t)=p$. Then by the definition of
cover $1-(g-g^\prime)\circ t$ is an automorphism. Thus,
$1-(g-g^\prime)\circ t$ is an isomorphism for all $t\in S$ and we
get that $g-g^\prime \in J(S)$. Therefore we can define a ring
homomorphism
$$
\varphi: \End(M)\rightarrow S/J(S)\, \mbox {by} \,
\varphi(f)=g+J(S).
$$
Call $K=\Ker(\varphi)$ and $J=J(S)$. Then $\varphi$ induces an injective ring
homomorphism $\Psi: \End(M)/K\rightarrow S/J$. A dual argument to the one used in Lemma \ref{converselemma} proves that:

\begin{lem}\label{dual1} Assume that
$j\in J$. Then there exists an element $k\in K$ such
that $p\circ j=k \circ p$.
\end{lem}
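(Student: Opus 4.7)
The plan is to dualize the argument of Lemma \ref{converselemma} in a completely straightforward manner. Given $j\in J=J(\End(X))$, the defining property of the Jacobson radical gives that $1_X-j$ is a unit of $S=\End(X)$, i.e., an automorphism of $X$. Since $M$ is $\mathcal{X}$-automorphism-coinvariant with respect to the cover $p:X\to M$, this automorphism lifts along $p$: there exists an endomorphism $f:M\to M$ such that
\[
f\circ p = p\circ (1_X-j).
\]

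Rearranging this identity gives exactly what we need. Specifically,
\[
p\circ j = p - p\circ(1_X-j) = p - f\circ p = (1_M-f)\circ p,
\]
so the candidate element is $k:=1_M-f\in\End(M)$, which satisfies $p\circ j = k\circ p$.

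It remains only to check that $k\in K=\Ker\varphi$. By the very construction of $\varphi$ given just before the lemma, the identity $p\circ(1_X-j)=f\circ p$ exhibits $1_X-j$ as a lift of $f$ through $p$, so $\varphi(f)=(1_X-j)+J$. Since $\varphi$ is a ring homomorphism sending $1_M$ to $1_X+J$, we compute
\[
\varphi(k)=\varphi(1_M-f)=(1_X+J)-(1_X-j+J)=j+J=0+J,
\]
the last equality because $j\in J$. Hence $k\in K$, completing the proof. There is no real obstacle here; the argument is just the mirror image of Lemma \ref{converselemma}, with the coinvariance hypothesis replacing invariance and the composition order reversed so that the cover $p$ appears on the right rather than the envelope $u$ on the left.
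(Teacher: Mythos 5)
Your proof is correct and is precisely the argument the paper intends: the paper states that Lemma \ref{dual1} follows by dualizing Lemma \ref{converselemma}, and your proof is exactly that dualization, lifting the automorphism $1_X-j$ through the cover $p$ to get $f$, setting $k=1_M-f$, and checking $\varphi(k)=0+J$. Nothing is missing.
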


\begin{theor}\label{dual3} If $M$ is $\mathcal{X}$-automorphism-coinvariant, then $\End(M)/K$ is von Neumann regular, $K=J(\End(M))$ and
idempotents lift modulo $J(\End(M))$.
\end{theor}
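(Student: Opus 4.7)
The plan is to mirror the proof of Theorem \ref{main1}, with compositions read in the opposite order and the epimorphism $p$ cancelled on the right in place of the monomorphism $u$ cancelled on the left. Writing $S = \End(X)$ and $J = J(S)$, the heart of the argument will be to apply Theorem \ref{key2} to the subring $T = \Im \Psi \subseteq S/J$. Once $T$ is shown to be stable under left multiplication by units of $S/J$, Theorem \ref{key2} will deliver von Neumann regularity of $T \cong \End(M)/K$, from which the identification $K = J(\End(M))$ and the lifting of idempotents will follow by standard manipulations.

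To verify the stability condition, I would first observe that every unit $u + J$ of $S/J$ admits a representative $u$ which is itself a unit of $S$: if $v$ is any preimage of $(u+J)^{-1}$, then both $uv$ and $vu$ lie in $1 + J$ and are therefore units of $S$, forcing $u$ to be a unit. Thus $u \in \Aut(X)$, and $\mathcal X$-automorphism-coinvariance produces $h \in \End(M)$ with $h \circ p = p \circ u$, so $\Psi(h + K) = u + J$. For any $\Psi(f + K) = g + J$ (chosen so $p \circ g = f \circ p$), the identity $p \circ (ug) = h \circ p \circ g = (hf) \circ p$ gives $(u + J)\Psi(f + K) = \Psi(hf + K) \in T$, which is the required stability.

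The inclusion $J(\End(M)) \subseteq K$ is then automatic from regularity of $\End(M)/K$. For the reverse, because $K$ is two-sided it suffices to prove $1 - f$ is invertible for each $f \in K$. Picking $g \in S$ with $p \circ g = f \circ p$, the fact that $f \in K$ forces $g \in J$, whence $1 - g$ is a unit of $S$. Coinvariance lifts $(1-g)^{-1}$ to some $h \in \End(M)$ with $h \circ p = p \circ (1-g)^{-1}$, and using $(1-f) \circ p = p \circ (1-g)$ one obtains
\[
h \circ (1-f) \circ p = p \circ (1-g)^{-1} \circ (1-g) = p = p \circ (1-g) \circ (1-g)^{-1} = (1-f) \circ h \circ p,
\]
after which right-cancellation of $p$ (it is epimorphic) yields $h \circ (1-f) = 1_M = (1-f) \circ h$.

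For idempotent lifting, take $f \in \End(M)$ with $f^2 + K = f + K$ and $g \in S$ with $p \circ g = f \circ p$. Then $g + J$ is idempotent in $S/J$ and lifts to some $e = e^2 \in S$ with $g - e \in J$; Lemma \ref{dual1} then provides $k \in K$ with $p \circ (g - e) = k \circ p$, i.e.\ $p \circ e = (f - k) \circ p$. Computing
\[
(f-k)^2 \circ p = (f-k) \circ p \circ e = p \circ e^2 = p \circ e = (f-k) \circ p
\]
and right-cancelling $p$ shows $(f-k)^2 = f - k$, so $f - k \equiv f \pmod{K}$ is the desired lift. The only real subtlety in the whole argument is the lifting of units from $S/J$ to $S$, which is what enables coinvariance to be applied; after that step, the proof is the straightforward dual of Theorem \ref{main1}.
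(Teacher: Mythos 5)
Your proof is correct and follows essentially the same route as the paper, whose own argument for this theorem is simply the dualization of Theorem \ref{main1} (apply Theorem \ref{key2} after showing $\Im\Psi$ is stable under left multiplication by units of $S/J$, then identify $K$ with $J(\End(M))$ and lift idempotents using Lemma \ref{dual1}). The details you fill in — lifting units of $S/J$ to units of $S$ via the Jacobson radical, and cancelling the epimorphism $p$ on the right where the envelope proof cancels the monomorphism $u$ — are exactly the ones the paper leaves implicit.
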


\begin{proof} Using Theorem \ref{key2}, in order to show that $\End(M)/K$
is von Neumann regular we only need to show that $\Im
\Psi$ is invariant under left multiplication by units of $S/J$. This can be proved in a similar way as in Theorem \ref{main1}.

As $\End(M)/K$ is von Neumann regular, clearly
$J(\End(M))\subseteq K$. Let us prove the converse. As $K$ is a
two-sided ideal, we only need to show that $1-f$ is invertible in
$\End(M)$ for every $f\in K$. So take $f\in K$. Then
$\Psi(1-f+K)=1+J$. Let $g: X\rightarrow X$ such that $(1-f)\circ
p=p\circ g$. Then $1-g\in J$ and thus, $g=1-(1-g)$ is invertible.
So there exists an $h: M\rightarrow M$ such $p\circ g^{-1}=h\circ p$. Therefore,
$$h\circ (1-f)\circ p=h\circ p\circ g=p\circ g^{-1}\circ g=p,
(1-f)\circ h\circ p=(1-f)\circ p\circ g^{-1}=p\circ g\circ
g^{-1}=p.
$$
And, as $p: X\rightarrow M$ is an epimorphic cover, we get that
$(1-f)\circ h$ and $h\circ (1-f)$ are automorphisms. Therefore
$1-f$ is invertible. So, $K=J(\End(M))$.

Finally, the proof that idempotents lift modulo $J(End(M))$ is also dual to the proof in Theorem \ref{main1} changing envelopes by covers.
\end{proof}

The proofs similar to Corollary \ref{finiteexchange}, Lemma \ref{summands}, Theorems
\ref{struct-invar}, \ref{fullexchange}, \ref{clean} and
\ref{F2-inv} also show:

\begin{cor} \label{dual-finiteexchange}
If $M$ is $\mathcal{X}$-automorphism-coinvariant, then $M$ satisfies the finite exchange property.
\end{cor}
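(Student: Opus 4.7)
The plan is to mirror the proof of Corollary \ref{finiteexchange} using the dual machinery that has just been established. The key input is Theorem \ref{dual3}, which tells us that if $M$ is $\mathcal{X}$-automorphism-coinvariant, then $\End(M)/K$ is von Neumann regular, $K=J(\End(M))$, and idempotents lift modulo $J(\End(M))$. Combining the first two assertions, $\End(M)/J(\End(M))$ is von Neumann regular, and idempotents lift from this quotient to $\End(M)$.

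From here the argument is essentially one line. First I would invoke \cite[Proposition 1.6]{Nichol} (the same citation used in the proof of Corollary \ref{finiteexchange}), which characterizes exchange rings as precisely those rings $R$ for which $R/J(R)$ is von Neumann regular and idempotents lift modulo $J(R)$. This yields that $\End(M)$ is an exchange ring.

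Finally, I would appeal to Warfield's theorem: a module $M$ has the finite exchange property if and only if $\End(M)$ is an exchange ring (this equivalence is quoted explicitly in the paragraph preceding Corollary \ref{finiteexchange}). Therefore $M$ has the finite exchange property, completing the proof.

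There is no real obstacle here; the substantive content is already contained in Theorem \ref{dual3}, and the corollary is a formal consequence via Nicholson's characterization of exchange rings and Warfield's transfer of the finite exchange property between a module and its endomorphism ring. The only thing to be careful about is to cite Theorem \ref{dual3} (the dual statement) rather than Theorem \ref{main1}, so that the word ``cover'' replaces ``envelope'' consistently.
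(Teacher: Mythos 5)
Your proof is correct and follows exactly the route the paper intends: the paper states that the proof is "similar to Corollary \ref{finiteexchange}," i.e.\ apply Theorem \ref{dual3} to get that $\End(M)/J(\End(M))$ is von Neumann regular with idempotents lifting, then conclude via Nicholson's \cite[Proposition 1.6]{Nichol} and Warfield's characterization of the finite exchange property. No gaps; nothing further is needed.
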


\begin{lem} \label{dual-summands}
If $M$ is $\mathcal{X}$-automorphism-coinvariant and every direct summand of $M$ has an $\mathcal X$-cover, then any direct summand of $M$ is also $\mathcal{X}$-automorphism-coinvariant.
\end{lem}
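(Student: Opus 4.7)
The plan is to dualize the proof of Lemma \ref{summands}, replacing the monomorphic envelope of the original argument by an epimorphic cover. Write $M = N \oplus L$ where $N$ is the given direct summand, and let $p_N : X_N \rightarrow N$ and $p_L : X_L \rightarrow L$ be the $\mathcal{X}$-covers of the two summands, which exist by hypothesis. The preliminary step is to verify that the induced map
\[
p := p_N \oplus p_L : X_N \oplus X_L \longrightarrow N \oplus L = M
\]
is an epimorphic $\mathcal{X}$-cover of $M$ (the dual of \cite[Theorem 1.2.5]{Xu} for covers). Since $\mathcal{X}$-covers are essentially unique and the property of $\mathcal{X}$-automorphism-coinvariance is transported along isomorphisms between covers, we may assume $p$ itself is the cover witnessing the coinvariance of $M$.

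Next, given any $g \in \Aut(X_N)$, form the diagonal automorphism $\tilde g := (g, 1_{X_L})$ of $X_N \oplus X_L$ and invoke the $\mathcal{X}$-automorphism-coinvariance of $M$ to produce some $f \in \End(M)$ with $f \circ p = p \circ \tilde g$. Denoting by $\iota_N, \iota_L, \pi_N, \pi_L$ the canonical inclusions and projections associated to $M = N \oplus L$, and by $\iota_N^X$ the inclusion $X_N \hookrightarrow X_N \oplus X_L$, the identity
\[
\pi_L \circ f \circ \iota_N \circ p_N = \pi_L \circ f \circ p \circ \iota_N^X = \pi_L \circ p \circ \tilde g \circ \iota_N^X = 0,
\]
combined with the fact that $p_N$ is an epimorphism, forces $\pi_L \circ f \circ \iota_N = 0$, so that $f(N) \subseteq N$. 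Setting $f_N := \pi_N \circ f \circ \iota_N \in \End(N)$, a symmetric diagram chase through the $N$-component gives $f_N \circ p_N = p_N \circ g$, which is precisely the condition needed to conclude that $N$ is $\mathcal{X}$-automorphism-coinvariant.

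The main obstacle I anticipate is the preliminary observation that a direct sum of two $\mathcal{X}$-covers is again an $\mathcal{X}$-cover; once this dual of Xu's theorem is in hand, the rest is a clean diagram chase that exploits the epimorphism $p_N$ to cancel on the right, exactly dual to the way the envelope proof of Lemma \ref{summands} exploits the monomorphism $u_N$ to ``inject into the target''.
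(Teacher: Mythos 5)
Your proposal is correct and is exactly the argument the paper intends: the paper gives no separate proof of this lemma, stating only that it follows by dualizing Lemma \ref{summands}, which is precisely what you carry out (direct sum of the two covers is a cover, diagonal automorphism $(g,1_{X_L})$, then project). The only remark worth making is that your intermediate step $f(N)\subseteq N$ (and hence the appeal to $p_N$ being epic, which does hold since any cover of $M$ is isomorphic to the epimorphic one fixed in the notation) is not actually needed: the direct computation $\pi_N\circ f\circ\iota_N\circ p_N=\pi_N\circ p\circ\tilde g\circ\iota_N^X=p_N\circ g$ already produces the required endomorphism $f_N=\pi_N\circ f\circ\iota_N$.
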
 

As discussed earlier in the paragraph above Theorem \ref{struct-invar}, for $M=N\oplus L$, we are again identifying ${\rm Hom}(N,L)$ and ${\rm Hom}(L,N)$ with appropriate subsets of $\End(M)$ in the next theorem.

\begin{theor} \label{struct-coinvar}
If $M$ is $\mathcal{X}$-automorphism-coinvariant and every direct summand of $M$ has an $\mathcal X$-cover, then $M$ admits a decomposition $M=N\oplus L$ such that:
\begin{enumerate}
\item[(i)] $N$ is a square-free module.
\item[(ii)] $L$ is $\mathcal X$-endomorphism-coinvariant and
$\End(L)/J(\End(L))$ is von Neumann regular, right self-injective
and idempotents lift modulo $J(\End(L))$.
\item[(iii)] Both $\H_R(N,L)$ and $\H_R(L,N)$ are contained in
$J(\End(M))$.
\end{enumerate}
In particular, $\End(M)/J(\End(M))$ is the direct product of an abelian regular ring and a right self-injective
von Neumann regular ring.
\end{theor}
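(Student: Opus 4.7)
The strategy is to dualize the proof of Theorem \ref{struct-invar} step by step, replacing envelopes by covers throughout. Let $p:X\rightarrow M$ be the epimorphic $\mathcal X$-cover, set $S=\End(X)$ and $J=J(S)$, and use Lemma \ref{type} to write $S/J=T_1\times T_2$ with $T_1$ abelian regular self-injective and every element of $T_2$ a sum of two units. The injective ring homomorphism $\Psi:\End(M)/J(\End(M))\hookrightarrow S/J$ constructed before Theorem \ref{dual3} lets us identify $\End(M)/J(\End(M))$ with its image in $S/J$.

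The first step is to check that $\Im\Psi$ is invariant under left multiplication by units of $S/J$. Given a unit $g+J$ with $g\in\Aut(X)$, the automorphism-coinvariance of $M$ furnishes $f\in\End(M)$ with $f\circ p=p\circ g$, so $\Psi(f+J(\End(M)))=g+J$. For any $\Psi(f'+J(\End(M)))\in\Im\Psi$ we then have $(g+J)\Psi(f'+J(\End(M)))=\Psi(ff'+J(\End(M)))\in\Im\Psi$. Applying Theorem \ref{key2}, $\End(M)/J(\End(M))=R_1\times R_2$ where $R_1$ is abelian regular and $R_2$ is right self-injective von Neumann regular and stable under left multiplication by $S/J$. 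Let $e+J(\End(M))$ be the central idempotent giving this decomposition. By Theorem \ref{dual3}, idempotents lift modulo $J(\End(M))$, so we may assume $e=e^{2}\in\End(M)$; set $N=eM$ and $L=(1-e)M$.

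Part (iii) is immediate from the product decomposition: under the identification of $\H_R(N,L)$ and $\H_R(L,N)$ with the Peirce off-diagonal subsets of $\End(M)$, both land in $J(\End(M))$ because they vanish in $R_1\times R_2$. Centrality of $e+J(\End(M))$ yields $\End(N)/J(\End(N))\cong R_1$ and $\End(L)/J(\End(L))\cong R_2$, and by Lemma \ref{dual-summands}, both $N$ and $L$ are themselves $\mathcal{X}$-automorphism-coinvariant, so idempotents lift modulo their Jacobson radicals. For (i), we copy the square-free argument from Theorem \ref{struct-invar}: a decomposition $N=N_1\oplus N_2\oplus N_3$ with $N_1\cong N_2\neq 0$ would produce nonzero orthogonal idempotents $e_1,e_2\in R_1$ with $e_1R_1\cong e_2R_1$, but centrality of idempotents in the abelian regular ring $R_1$ forces any such isomorphism to send $e_1$ to $e_2re_1=re_1e_2=0$, contradicting $e_2\neq 0$.

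For (ii), the invariance of $R_2$ under left multiplication by $S/J$ must be upgraded to $\mathcal X$-endomorphism-coinvariance of $L$. The plan is to prove (and this is the main obstacle) the dual of Lemma \ref{sum-units}: if every element of $\End(X_L)/J(\End(X_L))$ is a sum of two units, then any $\mathcal X$-automorphism-coinvariant module with cover $X_L$ is $\mathcal X$-endomorphism-coinvariant. The argument is the obvious dualization, namely lifting a sum-of-units decomposition of $s\in\End(X_L)$ to a sum of genuine units $s=s'+(s''+j)$ in $\End(X_L)$, producing via automorphism-coinvariance endomorphisms $f',f''\in\End(L)$ with $f'\circ p=p\circ s'$ and $(f''\circ p=p\circ(s''+j)$, and adding. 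Applied to $L$, whose cover $X_L$ satisfies $\End(X_L)/J(\End(X_L))\cong R_2\subseteq T_2$ (a ring in which every element is a sum of two units), this delivers $\mathcal X$-endomorphism-coinvariance of $L$, completing (ii). The concluding sentence of the statement follows directly from $\End(M)/J(\End(M))=R_1\times R_2$.
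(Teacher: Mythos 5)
Your proposal is correct and follows exactly the route the paper intends: the paper gives no separate proof of Theorem \ref{struct-coinvar}, stating only that it follows by dualizing Theorem \ref{struct-invar}, and your argument is precisely that dualization (the embedding $\Psi$ into $\End(X)/J(\End(X))$, invariance of its image under left multiplication by units, Theorem \ref{key2}, lifting the central idempotent, Lemma \ref{dual-summands}, and the square-free argument). Your handling of part (ii) via the dual of Lemma \ref{sum-units}, using that $\End(X_L)/J(\End(X_L))\cong R_2=T_2$ has every element a sum of two units, is exactly the mechanism behind the corresponding (tersely stated) step of Theorem \ref{struct-invar}, so no genuinely different method is involved.
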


\begin{theor} \label{dual-fullexchange}
Let $M$ be $\mathcal{X}$-automorphism-coinvariant and assume every direct summand of $M$ has an $\mathcal X$-cover. Assume further that for $\mathcal X$-endomorphism-coinvariant
modules, the finite exchange property implies the full exchange
property. Then $M$ satisfies the full exchange property.
\end{theor}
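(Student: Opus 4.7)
The plan is to mirror the argument for Theorem \ref{fullexchange} almost verbatim, replacing envelopes by covers and invoking the coinvariant analogues of the structure theorem, the finite exchange corollary, and the square-free lemma already developed in this section.

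First I would apply Theorem \ref{struct-coinvar} to obtain a decomposition $M=N\oplus L$ in which $N$ is square-free and $L$ is $\mathcal{X}$-endomorphism-coinvariant. Next, Corollary \ref{dual-finiteexchange} tells us that $M$ itself has the finite exchange property; since the finite exchange property passes to direct summands, both $N$ and $L$ inherit it. At this point the hypothesis of the theorem kicks in: because $L$ is $\mathcal{X}$-endomorphism-coinvariant with finite exchange, by assumption it has the full exchange property.

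For the square-free summand $N$, I would invoke the result of Nielsen (\cite[Theorem 9]{Nielsen}, already cited in the proof of Theorem \ref{fullexchange}) which asserts that a square-free module with finite exchange automatically has full exchange. Hence $N$ has full exchange as well. Finally, since the class of modules with the full exchange property is closed under finite direct sums, it follows that $M=N\oplus L$ has the full exchange property, completing the proof.

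There is essentially no new obstacle here beyond the coinvariant versions of the tools, which are all in place: the structure decomposition is Theorem \ref{struct-coinvar}, the finite exchange statement is Corollary \ref{dual-finiteexchange}, and Lemma \ref{dual-summands} ensures that direct summands of $M$ remain $\mathcal{X}$-automorphism-coinvariant (so the arguments producing the decomposition apply to the summands as needed). The only subtlety worth double-checking is that every direct summand of $M$ indeed has an $\mathcal{X}$-cover, but this is part of the hypothesis and is exactly what allows the use of Theorem \ref{struct-coinvar} and Lemma \ref{dual-summands}.
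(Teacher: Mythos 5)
Your proposal is correct and follows essentially the same route the paper takes: the paper proves Theorem \ref{dual-fullexchange} by dualizing the proof of Theorem \ref{fullexchange}, i.e., decomposing $M=N\oplus L$ via Theorem \ref{struct-coinvar}, using Corollary \ref{dual-finiteexchange} to get finite exchange for both summands, the hypothesis for $L$, Nielsen's theorem for the square-free summand $N$, and closure of full exchange under finite direct sums.
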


\begin{theor} \label{dual-clean}
Let $M$ be $\mathcal{X}$-automorphism-coinvariant and assume that every direct summand of $M$ has an $\mathcal X$-cover. Then $M$ is a clean module.
\end{theor}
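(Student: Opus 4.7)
The plan is to mirror verbatim the proof of Theorem \ref{clean}, using the dual structural and lifting results that have already been established for $\mathcal X$-automorphism-coinvariant modules in this section, namely Theorem \ref{struct-coinvar} and Theorem \ref{dual3}.

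First I would invoke Theorem \ref{struct-coinvar} to obtain a decomposition
\[
\End(M)/J(\End(M))\;\cong\; R_1\times R_2,
\]
where $R_1$ is an abelian regular ring and $R_2$ is a von Neumann regular right self-injective ring. Both factors are known to be clean rings: abelian regular rings are clean (every element is the product of a unit and a central idempotent, which can be refined to a clean decomposition), and right self-injective von Neumann regular rings are clean by standard results in the literature (see e.g.\ Nicholson's work cited as \cite{Nichol}). Since a finite direct product of clean rings is clean, it follows that $\End(M)/J(\End(M))$ is a clean ring.

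Next I would apply Theorem \ref{dual3} to conclude that idempotents lift modulo $J(\End(M))$. Combining the two facts via Nicholson's classical criterion, which says that a ring $R$ is clean precisely when $R/J(R)$ is clean and idempotents lift modulo $J(R)$ \cite[Page 272]{Nichol}, we deduce that $\End(M)$ itself is clean. By definition, this means that $M$ is a clean module, completing the argument.

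There is essentially no main obstacle here, since all the serious work has been concentrated in Theorems \ref{struct-coinvar} and \ref{dual3}; the only point requiring care is to cite the correct reference for the cleanness of abelian regular rings and right self-injective regular rings, and to use Nicholson's lifting/cleanness characterization rather than trying to produce a clean decomposition of an arbitrary endomorphism by hand.
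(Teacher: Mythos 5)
Your proposal is correct and takes essentially the same route as the paper, which proves Theorem \ref{dual-clean} by dualizing the proof of Theorem \ref{clean}: apply Theorem \ref{struct-coinvar} to write $\End(M)/J(\End(M))$ as a product of an abelian regular ring and a right self-injective von Neumann regular ring (both clean, and products of clean rings are clean), use the idempotent lifting from Theorem \ref{dual3}, and conclude via Nicholson \cite{Nichol} that $\End(M)$ is clean.
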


\begin{theor}\label{dual-F2-inv}
Let $M$ be $\mathcal{X}$-automorphism-coinvariant. If $\End(M)$
has no homomorphic images isomorphic to $\mathbb{Z}_2$, then $M$ is $\mathcal X$-endomorphism-coinvariant
and $\End(M)/J(\End(M))=\End(X)/J(\End(X))$. In particular,
$\End(M)/J(\End(M))$ is von Neumann regular, right self-injective
and idempotents lift modulo $J(\End(M))$.

This is the case when $char(\End(M))=n>2$ and $2\nmid n$.
\end{theor}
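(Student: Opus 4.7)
The plan is to run the proof of Theorem \ref{F2-inv} in reverse, exchanging envelopes for covers at each step, since all the ring-theoretic machinery of Section 2 is orientation-free and the dual construction of $\Psi:\End(M)/K\rightarrow \End(X)/J(\End(X))$ has already been set up at the beginning of Section 4.

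First I would observe that if $\End(M)$ has no homomorphic image isomorphic to $\mathbb{Z}_2$, then neither does $\End(M)/J(\End(M))$, since any ring homomorphism $\bar\delta$ from the quotient pulls back to one from $\End(M)$. By Theorem \ref{dual3} we have $K=J(\End(M))$, so $\Psi$ embeds $\End(M)/J(\End(M))$ as a subring of $\End(X)/J(\End(X))$. The key point, established inside the proof of Theorem \ref{dual3} (and argued exactly as in the envelope case of Theorem \ref{main1}), is that $\Im\Psi$ is stable under left multiplication by units of $\End(X)/J(\End(X))$: given a unit $g+J$ of the target, $g$ is an automorphism of $X$, so by $\mathcal{X}$-automorphism-coinvariance there is $f\in\End(M)$ with $p\circ g=f\circ p$, and left multiplication by $\Psi(f+J(\End(M)))=g+J$ preserves $\Im\Psi$.

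With these two ingredients I can invoke Proposition \ref{no-F2} directly on the pair ($R=\Im\Psi$, $S=\End(X)/J(\End(X))$), which yields $\End(M)/J(\End(M))=\End(X)/J(\End(X))$. As remarked in the proof of Proposition \ref{no-F2}, this equality forces $\End(X)/J(\End(X))$ itself to have no homomorphic images isomorphic to $\mathbb{Z}_2$, so by \cite{KS1} every element of $\End(X)/J(\End(X))$ is a sum of two units; lifting modulo $J(\End(X))$ exactly as in Lemma \ref{sum-units} promotes this to $\End(X)$. To conclude $\mathcal{X}$-endomorphism-coinvariance, given any $g\in\End(X)$ I would write $g=u+v$ with $u,v\in\Aut(X)$, use $\mathcal{X}$-automorphism-coinvariance to pick $f_u,f_v\in\End(M)$ with $p\circ u=f_u\circ p$ and $p\circ v=f_v\circ p$, and set $f=f_u+f_v$ so that $p\circ g=f\circ p$; this is the cover-side analogue of Lemma \ref{sum-units}. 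The properties of $\End(M)/J(\End(M))$ listed in the statement then follow from the equality with $\End(X)/J(\End(X))$, which is von Neumann regular, right self-injective with idempotents lifting modulo its Jacobson radical by hypothesis in the Notation block.

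For the final sentence, I would simply repeat the argument given in Corollary \ref{reg2}: if $\mathrm{char}(\End(M))=n$ with $2\nmid n$, then any ring homomorphism $\delta:\End(M)\rightarrow\mathbb{Z}_2$ would annihilate both $2\cdot 1$ and $n\cdot 1$, hence (by coprimality) also $1\cdot 1$, contradicting that $\delta$ is a ring map. I do not foresee any real obstacle: the entire argument is a mechanical translation of the envelope case, and the only non-trivial ingredient, stability of $\Im\Psi$ under left multiplication by units of $\End(X)/J(\End(X))$, has already been used in the proof of Theorem \ref{dual3}.
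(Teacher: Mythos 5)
Your proposal is correct and follows essentially the route the paper intends: the paper gives no separate proof for Theorem \ref{dual-F2-inv}, stating only that it follows by dualizing Theorem \ref{F2-inv}, and your argument is exactly that dualization (identify $\End(M)/J(\End(M))$ with $\Im\Psi$ via Theorem \ref{dual3}, use its stability under left multiplication by units of $\End(X)/J(\End(X))$ to apply Proposition \ref{no-F2}, lift the sum-of-two-units property to $\End(X)$ as in Lemma \ref{sum-units}, and conclude coinvariance by splitting each $g\in\End(X)$ as a sum of two automorphisms, with the characteristic claim handled as in Corollary \ref{reg2}). Nothing in your write-up deviates from or falls short of the paper's intended argument.
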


\bigskip

\section{Applications}

\noindent We are going to finish the paper by showing how our results can be applied to  a wide variety of classes of modules obtaining interesting consequences for them.

\bigskip

\noindent {\bf Application 1.} Let $\mathcal{X}$ be the class of injective modules and $M$, an $\mathcal{X}$-automorphism-invariant module. Let $E$ be the injective envelope of $M$. Then $S=\End(E)$ satisfies that $S/J(S)$ is von Neumann regular, right
self-injective and idempotents lift modulo $J(S)$. Therefore,

\medskip \noindent (a) \cite[Proposition 1]{GS2}. By Theorem \ref{main1}, it follows that
$\End(M)/J(\End(M))$ is von Neumann regular and idempotents lift
modulo $J(\End(M))$. This extends the corresponding result for quasi-injective modules by Faith and Utumi \cite{FU}.

\medskip \noindent (b) \cite[Theorem 3]{ESS}. By Theorem \ref{struct-invar}, $M=N\oplus L$ where $N$ is square-free and $L$ is quasi-injective.

 \medskip \noindent (c) \cite[Theorem 3]{GS2}. Since every quasi-injective module satisfies the full exchange property \cite{Fuchs}, we deduce from
 Theorem \ref{fullexchange} that $M$ also satisfies the full exchange property. This extends results of Warfield \cite{Warfield1} and Fuchs \cite{Fuchs}.

 \medskip \noindent (d) \cite[Corollary 4]{GS2}. By Theorem \ref{clean}, $M$ is a clean module.

 \medskip \noindent (e) \cite[Theorem 3]{GS1}. By Theorem \ref{F2-inv}, if $\End(M)$ has no homomorphic images isomorphic to $\mathbb{Z}_2$, then $M$ is quasi-injective.

\bigskip

\noindent {\bf Application 2.} Let $\mathcal{X}$ be the class of pure-injective modules and
$M$, an $\mathcal{X}$-automorphism-invariant module. Let $E$ be the
pure-injective envelope of $M$. Then $S=\End(E)$ satisfies that
$S/J(S)$ is von Neumann regular, right self-injective and
idempotents lift modulo $J(S)$ (see e.g. \cite{GH}). Therefore,

\medskip \noindent (a) By Theorem \ref{main1}, it follows that $\End(M)/J(\End(M))$ is
von Neumann regular and idempotents lift modulo $J(\End(M))$. This extends the characterization of the endomorphism rings of pure-injective modules obtained by Huisgen-Zimmermann and Zimmermann \cite{ZZ}.

\medskip \noindent (b) By Theorem \ref{struct-invar}, $M=N\oplus L$ where $N$ is square-free and $L$ is invariant under endomorphisms of its pure-injective envelope. In fact, $L$ turns out to be strongly invariant in its pure-injective envelope in the sense of \cite[Definition, p. 430]{ZZ1}.

\medskip \noindent (c) It follows from \cite[Theorem 11]{ZZ1} that a module invariant under endomorphisms of its pure-injective envelope satisfies the full exchange property. Therefore, we deduce from Theorem \ref{fullexchange} that $M$ also satisfies the full exchange property, thus extending \cite[Examples, p. 431]{ZZ1}.

 \medskip \noindent (d) By Theorem \ref{clean}, $M$ is a clean module.

\bigskip

\noindent {\bf Application 3.} Let $\mathcal{X}$ be the class of projective modules over a right perfect ring and $M\in \M$-$R$, an $\mathcal{X}$-automorphism-coinvariant module, then

\medskip \noindent (a) By Theorem \ref{dual3},
$\End(M)/J(\End(M))$ is von Neumann regular and idempotents lift
modulo $J(\End(M))$. 

\medskip \noindent (b) By Theorem \ref{struct-coinvar}, $M=N\oplus L$ where $N$ is square-free and $L$ is quasi-projective.

\medskip \noindent (c) By (a), $M$ satisfies the finite exchange property. Moreover, we know by (b) that $M=N\oplus L$
where $N$ is square-free and $L$ is quasi-projective. Thus $N$
satisfies the finite exchange property. Since $N$ is square-free,
this implies that $N$ satisfies the full exchange property. It is known
that a quasi-projective right modules over a right perfect ring is
discrete (see \cite[Theorem 4.41]{mm}). Thus $L$ is a discrete
module. Since discrete modules satisfy the exchange property, we
have that $L$ satisfies the full exchange property.

Thus it follows that $M$ satisfies the full exchange property.

\medskip \noindent (d) By Theorem \ref{dual-clean}, $M$ is clean.

\medskip \noindent (e) By Theorem \ref{dual-F2-inv}, if $\End(M)$ has no homomorphic image isomorphic to $\mathbb {Z}_2$, then $M$ is quasi-projective.

\bigskip
\noindent {\bf Application 4.} Let $\mathcal{X}$ be the class of projective modules over a semiperfect ring and $M\in \M$-$R$, a finitely generated $\mathcal{X}$-automorphism-coinvariant module, then

\medskip \noindent (a) By Theorem \ref{dual3},
$\End(M)/J(\End(M))$ is von Neumann regular and idempotents lift
modulo $J(\End(M))$. 

\medskip \noindent (b) By Theorem \ref{struct-coinvar}, $M=N\oplus L$ where $N$ is square-free and $L$ is quasi-projective.

\medskip \noindent (c) By (a), $M$ satisfies the finite exchange property. Moreover, we know by (b) that $M=N\oplus L$
where $N$ is square-free and $L$ is quasi-projective. Since $N$ is square-free,
this implies that $N$ satisfies the full exchange property. It is known
that a finitely generated quasi-projective right modules over a right semiperfect ring is
discrete (see \cite[Theorem 4.41]{mm}), therefore $L$ satisfies the full exchange property.

Thus it follows that $M$ satisfies the full exchange property.

\medskip \noindent (d) By Theorem \ref{dual-clean}, $M$ is clean.

\medskip \noindent (e) By Theorem \ref{dual-F2-inv}, if $\End(M)$ has no homomorphic image isomorphic to $\mathbb {Z}_2$, then $M$ is quasi-projective.

\bigskip
\noindent {\bf Application 5.} Let $(\mathcal{F}, \mathcal{C})$ be
a cotorsion pair (see e.g. \cite{GT}), i.e.,
$\mathcal{F}, \mathcal{C}$ are two classes of modules such that \\
(i) $F\in \mathcal{F}\Leftrightarrow$ Ext$^1(F, C)=0$ for all $C\in
\mathcal{C}$.\\
(ii) $C\in \mathcal{C}\Leftrightarrow$ Ext$^1(F, C)=0$ for all $F\in
\mathcal{F}$.

Let us assume that $\mathcal{F}$ is closed under direct limits and
that any module has an $\mathcal{F}$-cover (and therefore a
$\mathcal{C}$-envelope). This is true, for instance,  if there exists
a subset $\mathcal{F}_0\subset \mathcal{F}$ such that $C\in
\mathcal{C}\Leftrightarrow$ Ext$^1(F, C)=0$ for all $F\in
\mathcal{F}_0$ (see e.g. \cite[Theorem 2.6]{EEGO} or \cite[Chapter 6]{GT}). It is well-known
that in a cotorsion pair, any $\mathcal{C}$-envelope $u:
M\rightarrow C(M)$ is monomorphic and any $\mathcal{F}$-cover $p:
F(M)\rightarrow M$ is epimorphic.

In the particular case in which $\mathcal F$ is the class of flat modules, the cotorsion pair $(\mathcal{F},\mathcal{C})$ is usually called the {\em Enochs cotorsion pair} \cite[Definition 5.18, p. 122]{GT} and modules in $\mathcal C$ are just called {\em cotorsion modules}. By the definition of a cotorsion pair, we get that a module $M$ is cotorsion if and only if ${\rm Ext}^1(F, M)=0$ for every flat module. A ring $R$ is called {\em right
cotorsion} if $R_R$ is a cotorsion module.

\medskip
The following easy lemma is implicitly used in \cite{GH} without any proof. We are including a proof for the sake of completeness.

\begin{lem}\label{app1} Assume that $(\mathcal{F}, \mathcal{C})$
is a cotorsion pair and $\mathcal F$ is closed under direct limits. Let $X\in \mathcal{F}\cap \mathcal{C}$. Then
$S=\End(X)$ is a right cotorsion ring.

In particular, $S/J(S)$ is
von Neumann regular, right self-injective and idempotents lift modulo $J(S)$.
\end{lem}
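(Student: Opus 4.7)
The plan is to establish the two assertions in turn, with the substance in the first; the ``in particular'' clause is a standard consequence of being a right cotorsion ring.

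For the first assertion, I will show $\mathrm{Ext}^1_S(F, S_S)=0$ for every flat right $S$-module $F$. The strategy is to transport any extension of $S_S$ by $F$ into $\M$-$R$ via the tensor--hom adjunction for the $S$-$R$-bimodule $X$, split it there using that $(\mathcal F,\mathcal C)$ is a cotorsion pair, and then pull the splitting back. Fix $F$ flat and a short exact sequence $0\to S\xrightarrow{\iota} E\to F\to 0$ in $\M$-$S$. Tensoring with $X$ over $S$ preserves exactness since $F$ is flat, giving
\[
0\to X\to E\otimes_S X\to F\otimes_S X\to 0
\]
in $\M$-$R$.

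The key step is to argue $F\otimes_S X\in\mathcal F$. By Lazard's theorem, $F=\varinjlim S^{n_i}$ is a direct limit of finitely generated free right $S$-modules, hence $F\otimes_S X\cong\varinjlim X^{n_i}$. Since the left class of any cotorsion pair is closed under finite direct sums, each $X^{n_i}$ belongs to $\mathcal F$; by the hypothesis that $\mathcal F$ is closed under direct limits, so does $F\otimes_S X$. Combined with $X\in\mathcal C$, this forces $\mathrm{Ext}^1_R(F\otimes_S X,X)=0$, so the tensored sequence splits in $\M$-$R$; fix a retraction $\pi:E\otimes_S X\to X$.

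To recover a splitting of the original sequence, I will define $\phi:E\to S=\H_R(X,X)$ by $\phi(e)(x)=\pi(e\otimes x)$. A direct check, using the left $S$-action $s\cdot x=s(x)$ on $X$, shows $\phi$ is right $S$-linear, and since $\pi$ restricts to the canonical isomorphism $S\otimes_S X\cong X$ along $\iota\otimes 1_X$, one obtains $\phi\circ\iota=\mathrm{id}_S$. Thus the original sequence splits and $S$ is right cotorsion. The main obstacle I anticipate is not conceptual but notational---keeping straight the two module structures on $X$ (right $R$ and left $S$) when verifying $S$-linearity of $\phi$ and the identity $\phi\circ\iota=\mathrm{id}_S$.

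The ``in particular'' clause then follows from the known structural theorem for right cotorsion rings: for any such ring $R$, $R/J(R)$ is von Neumann regular and right self-injective with idempotents lifting modulo $J(R)$; this is precisely the content established in \cite{GH}.
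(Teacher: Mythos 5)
Your proof is correct and follows essentially the same route as the paper: tensor the extension with $X$ over $S$, use Lazard's theorem together with closure of $\mathcal F$ under direct limits to see $F\otimes_S X\in\mathcal F$, split in $\M$-$R$ using $X\in\mathcal C$, and pull the splitting back, finishing with the main result of \cite{GH} for the ``in particular'' clause. The only cosmetic difference is that you write the retraction $\phi(e)(x)=\pi(e\otimes x)$ out explicitly, whereas the paper packages the same map via the unit of the tensor--hom adjunction.
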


\begin{proof} Let us show that $S$ is a right cotorsion ring. Let $F$ be a flat right $S$-module and let
$$\Sigma:\,\,0\rightarrow S\overset{u}{\longrightarrow} N\overset{p}{\longrightarrow}F\rightarrow 0$$
be any extension of $F$ by $S$. We must show that $\Sigma$ splits.
As $F$ is flat, we may write it as a direct limit of free modules
of finite rank, say $F=\underrightarrow{\text{lim}}\,S^{n_i}$.
Applying now the tensor functor $T=-\otimes_SX$,  we get in
Mod-$R$ the sequence
$$0\rightarrow T(S)\overset{T(u)}{\longrightarrow} T(N)\overset{T(p)}{\longrightarrow}T(F)\rightarrow 0$$
which is exact since $F$ is flat. Moreover, $T(S)\cong X$.

On the other hand, as $T=-\otimes_SX$ commutes with direct limits,
we get that
$$T(F)= T(\underrightarrow{\text{lim}}\,S^{n_i})\cong \underrightarrow{\text{lim}}\,T(S^{n_i})\cong \underrightarrow{\rm lim}\,X^{n_i}$$
and thus, $T(F)$ belongs to $\mathcal F$ since we are assuming that $\mathcal F$ is closed under direct limits. As $X_R\in\mathcal C$, we get that the above sequence splits and therefore, there exists a $\pi:T(N)\rightarrow T(S)$ such that $\pi\circ T(u)=1_{T(S)}$. Applying now the functor $H={\rm Hom}_R(X,-)$, we get a commutative diagram in $\M$-$S$
$$\begin{array}{ccccccccc}
  0&\rightarrow & S&\overset{u}{\longrightarrow}& N&\overset{p}{\longrightarrow}& F&\rightarrow& 0\\
  &&\downarrow^{\sigma_S}&&\downarrow^{\sigma_N}&&\downarrow^{\sigma_F}\\
  0&\rightarrow & HT(S) & \xrightarrow{HT(u)} &HT(N) & \xrightarrow{HT(p)}&HT(F) & &
\end{array}$$
in which $\sigma:1_{{\rm Mod}-S}\rightarrow HT$ is the arrow of the adjunction and $\sigma_S$ is an isomorphism. Then we have that
$$\sigma_S^{-1}\circ H(\pi)\circ\sigma_N\circ u=\sigma_S^{-1}\circ H(\pi)\circ HT(u)\circ \sigma_S)=\sigma_S^{-1}\circ 1_{HT(S)}\circ\sigma_S=1_S$$
and this shows that $\Sigma$ splits.

Therefore, $S=\End(X)$ is right cotorsion. Now, $S/J(S)$ is
von Neumann regular, right self-injective and idempotents lift modulo $J(S)$ by the main result of \cite{GH}.
\end{proof}

\medskip
\noindent It is well-known that if $(\mathcal{F},\mathcal{C})$ is a cotorsion pair and $u: M\rightarrow
C(M)$ is a $\mathcal{C}$-envelope, then Coker$(u)\in \mathcal{F}$ (see e.g. \cite{Xu}).
In particular, if $M\in \mathcal F$, then $C(M)\in \mathcal F$ (since  $\mathcal{F}$ is closed under extensions) and thus
$C(M)\in \mathcal{F}\cap \mathcal{C}$. 
Dually, if $p:
F(M)\rightarrow M$ is an $\mathcal{F}$-cover, then $\Ker p\in
\mathcal{C}$ and, if $M\in \mathcal C$, this means that $F(M)\in \mathcal C$, as  $\mathcal{C}$ is also closed under
extensions. Therefore, we also get that $F(M)\in \mathcal{F}\cap \mathcal{C}$.
As a consequence, we can apply our previous results to this
situation.

\begin{theor}\label{app2} Let $(\mathcal{F}, \mathcal{C})$ be a
cotorsion pair such that $\mathcal{F}$ is closed under direct limits and every module has an $\mathcal{F}$-cover.

\medskip

\begin{enumerate}
\item If $M\in \mathcal{F}$ is
$\mathcal{C}$-automorphism-invariant, then

\medskip
\noindent (i) $\End(M)/J(\End(M))$ is
von Neumann regular and idempotents lift modulo $J(\End(M))$. Consequently, $M$ satisfies the finite exchange property.

\medskip
\noindent (ii) $M=N\oplus L$ where $N$ is square-free and and $L$ is $\mathcal{C}$-endomorphism-invariant.

\medskip
\noindent (iii) $M$ is a clean module.

\medskip

\item If $M\in \mathcal{C}$ is
$\mathcal{F}$-automorphism-coinvariant, then

\medskip
\noindent (i) $\End(M)/J(\End(M))$ is
von Neumann regular and idempotents lift modulo $J(\End(M))$. Consequently, $M$ satisfies the finite exchange property.

\medskip
\noindent (ii) $M=N\oplus L$ where $N$ is square-free and and $L$
is $\mathcal{F}$-endomorphism-coinvariant.

\medskip
\noindent (iii) $M$ is a clean module.\end{enumerate}
\end{theor}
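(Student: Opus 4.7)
The plan is to view this theorem as a direct specialization of the general machinery developed in Sections~3 and~4 to the setting of cotorsion pairs; essentially all the heavy lifting is already done by Lemma~\ref{app1}, which supplies the key hypothesis on the envelope/cover's endomorphism ring.

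First, I will handle part~(1). The $\mathcal C$-envelope $u:M\to C(M)$ is monomorphic (as recalled in the paragraph preceding the theorem) and satisfies $\mathrm{Coker}(u)\in\mathcal F$. Since $M\in\mathcal F$ and $\mathcal F$ is closed under extensions (being the left-hand side of a cotorsion pair), we obtain $C(M)\in\mathcal F$, and of course $C(M)\in\mathcal C$. Thus $C(M)\in\mathcal F\cap\mathcal C$, so Lemma~\ref{app1} applies and yields that $S=\End(C(M))$ has the property that $S/J(S)$ is a von Neumann regular right self-injective ring in which idempotents lift modulo $J(S)$. This is exactly the hypothesis in the Notation preceding Theorem~\ref{main1}, so an application of Theorem~\ref{main1} gives the regularity and idempotent-lifting statements in~(i), and Corollary~\ref{finiteexchange} then gives the finite exchange property. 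For~(ii) and~(iii) I need the additional hypothesis that every direct summand of $M$ admits a $\mathcal C$-envelope; but by the standing assumption every module has an $\mathcal F$-cover, hence (by Salce's lemma, as recalled in the paragraph before Lemma~\ref{app1}) every module has a $\mathcal C$-envelope, so this hypothesis is automatic. Theorem~\ref{struct-invar} then produces the decomposition $M=N\oplus L$ with $N$ square-free and $L$ being $\mathcal C$-endomorphism-invariant, while Theorem~\ref{clean} gives that $M$ is clean.

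Part~(2) is entirely dual. The $\mathcal F$-cover $p:F(M)\to M$ is epimorphic and satisfies $\Ker(p)\in\mathcal C$; since $M\in\mathcal C$ and $\mathcal C$ is closed under extensions, we conclude $F(M)\in\mathcal F\cap\mathcal C$, and Lemma~\ref{app1} again furnishes the needed properties of $\End(F(M))/J(\End(F(M)))$. Every direct summand of $M$ has an $\mathcal F$-cover by hypothesis, so Theorem~\ref{dual3}, Corollary~\ref{dual-finiteexchange}, Theorem~\ref{struct-coinvar} and Theorem~\ref{dual-clean} apply and deliver~(i),~(ii) and~(iii), respectively.

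I do not anticipate any serious obstacle: the content of the proof is the verification that the envelope (respectively cover) lands in $\mathcal F\cap\mathcal C$, which is an immediate consequence of the hypothesis $M\in\mathcal F$ (respectively $M\in\mathcal C$) together with the closure of the two classes of a cotorsion pair under extensions. Once this is done, Lemma~\ref{app1} converts it into the abstract hypothesis required by the general theorems of Sections~3 and~4, and the rest is a bookkeeping exercise of invoking those theorems in the correct order.
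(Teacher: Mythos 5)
Your proposal is correct and follows essentially the same route as the paper, which presents this theorem as an immediate consequence of the preceding paragraph (envelope/cover lies in $\mathcal F\cap\mathcal C$ by closure under extensions), Lemma \ref{app1}, and the general theorems of Sections 3 and 4. The only cosmetic quibble is your attribution of the existence of $\mathcal C$-envelopes to ``Salce's lemma'': the paper simply builds this into its standing hypotheses (``any module has an $\mathcal F$-cover (and therefore a $\mathcal C$-envelope)''), the implication really resting on the completeness theory for cotorsion pairs with $\mathcal F$ closed under direct limits, but this does not affect the correctness of your argument.
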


\noindent In particular, the above theorem applies to the case of the Enochs cotorsion pair, in which $\mathcal F$ is the class of flat modules and $\mathcal C$, the class of cotorsion modules.

\bigskip

{\bf Acknowledgments.} The authors would like to thank the referee for his/her careful reading and helpful comments. The authors would also like to thank Greg Marks for his comments on an earlier draft of this paper.

\bigskip

\bigskip

\bigskip

\end{document}